\newcommand{\N}{{\mathbb N}}
\newcommand{\C}{{\mathbb C}}
\newcommand{\Q}{{\mathbb Q}}
\DeclareMathOperator{\Alb}{Alb}
\DeclareMathOperator{\Pic}{Pic}
\DeclareMathOperator{\codim}{codim}
\DeclareMathOperator{\GV}{GV}
\DeclareMathOperator{\IT}{IT_0}
\DeclareMathOperator{\red}{red}
\DeclareMathOperator{\Var}{Var}
\DeclareMathOperator{\rank}{rank}
\theoremstyle{plain}
\newtheorem{thm}{Theorem}[section]
\newtheorem{conj}[thm]{Conjecture}
\newtheorem{coro}[thm]{Corollary}
\newtheorem{lemma}[thm]{Lemma}
\theoremstyle{definition}
\newtheorem{defi}[thm]{Definition}
\newtheorem{rem}[thm]{Remark}
\newtheorem*{ac}{Acknowledgements}
\begin{document}

\title[Estimates on the Kodaira dimension]{Estimates on the Kodaira dimension for fibrations over abelian varieties}
	
	\author{Fanjun Meng}
	\address{Department of Mathematics, Johns Hopkins University, 3400 N. Charles Street, Baltimore, MD 21218, USA}
	\email{fmeng3@jhu.edu}

	\thanks{2020 \emph{Mathematics Subject Classification}: 14D06, 14K05.\newline
		\indent \emph{Keywords}: Kodaira dimension, fibrations, abelian varieties.}

\begin{abstract}
We give estimates on the Kodaira dimension for fibrations over abelian varieties, and give some applications. One of the results strengthens the subadditivity of Kodaira dimension of fibrations over abelian varieties.
\end{abstract}

\maketitle
	\setcounter{tocdepth}{1}
	\tableofcontents

\section{Introduction}\label{1}

In this paper, we give estimates on the Kodaira dimension for fibrations over abelian varieties over $\C$, and give some applications.

\begin{thm}\label{hp}
Let $f\colon X \to A$ be a fibration from a smooth projective variety $X$ to an abelian variety $A$ where $f$ is smooth over an open set $V\subseteq A$, and $m$ a positive integer. Then
$$\kappa(V)\ge\kappa(A, \widehat{\det} f_* \omega_{X}^{\otimes m})\ge \dim V^0(A, f_*\omega_X^{\otimes m}).$$
If $m>1$, then $\kappa(A, \widehat{\det} f_* \omega_{X}^{\otimes m})=\dim V^0(A, f_*\omega_X^{\otimes m})$.
\end{thm}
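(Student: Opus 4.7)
The approach rests on the Chen--Jiang type decomposition available for $f_*\omega_X^{\otimes m}$ on an abelian variety, together with the structure of cohomological support loci there. By Chen--Jiang (for $m=1$) and its extensions to arbitrary $m$, one can write
$$f_*\omega_X^{\otimes m}\cong\bigoplus_{i\in I}\bigl(\varphi_i^*\mathcal{F}_i\otimes\alpha_i\bigr),$$
where $\varphi_i\colon A\to B_i$ is a surjective homomorphism (after translation) to an abelian variety, $\mathcal{F}_i$ is an M-regular coherent sheaf on $B_i$, and $\alpha_i\in\Pic^0(A)$ is a torsion line bundle. Up to a torsion twist absorbed into the definition, $\widehat{\det}f_*\omega_X^{\otimes m}$ is then $\bigotimes_i\varphi_i^*\det\mathcal{F}_i$.

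For the second inequality, M-regularity of $\mathcal{F}_i$ on $B_i$ gives $V^0(A,\varphi_i^*\mathcal{F}_i\otimes\alpha_i)=\varphi_i^*\Pic^0(B_i)-\alpha_i$, so $V^0(A,f_*\omega_X^{\otimes m})$ is a finite union of translated subtori of common dimension $\max_i\dim B_i$. On the other hand, M-regularity forces $\det\mathcal{F}_i$ to be big on $B_i$ via Hacon's criterion, hence $\varphi_i^*\det\mathcal{F}_i$ has Iitaka dimension $\dim B_i$ on $A$; the tensor product therefore satisfies $\kappa(A,\widehat{\det}f_*\omega_X^{\otimes m})\ge\max_i\dim B_i$. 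Equality when $m>1$ comes from the fact that in this case the decomposition simplifies: all summands can be taken to factor through a single dominant quotient, so the tensor product of pullbacks descends to an abelian variety of dimension $\max_i\dim B_i$ and its Iitaka dimension is therefore also at most this number.

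The first inequality $\kappa(V)\ge\kappa(A,\widehat{\det}f_*\omega_X^{\otimes m})$ is the geometrically substantive part. Setting $D=A\setminus V$, the plan is to construct an inclusion
$$\widehat{\det}f_*\omega_X^{\otimes m}\hookrightarrow\mathcal{O}_A(cD)$$
for some $c\in\N$, which yields at once $\kappa(A,\widehat{\det}f_*\omega_X^{\otimes m})\le\kappa(A,cD)=\kappa(V)$. The main obstacle is producing this inclusion: since $f$ is smooth over $V$, $f_*\omega_X^{\otimes m}$ is locally free there and its positivity is controlled via Viehweg-style weak positivity; the polar contribution along the discriminant must then be absorbed by a sufficient multiple of $D$. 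Making this comparison precise --- carefully tracking torsion twists and the compatibility of the various $\varphi_i$ with $V$ --- is the most delicate step of the proof.
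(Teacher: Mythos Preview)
Your treatment of the second inequality and the equality for $m>1$ is essentially correct and close to what the paper does. Computing $\widehat{\det}f_*\omega_X^{\otimes m}$ directly from the Chen--Jiang decomposition as $\bigotimes_i \varphi_i^*\widehat{\det}\mathcal{F}_i$ tensored with a torsion line bundle, and then using that each $\widehat{\det}\mathcal{F}_i$ is big (hence ample) on $B_i$ because $\mathcal{F}_i$ is M-regular, is exactly how the paper proves the equality when $m>1$ (Lemma~\ref{equality}), and the same computation gives the inequality for all $m\ge 1$ once you note that each factor has nonnegative Iitaka dimension. The paper actually proves the inequality $\kappa(A,\widehat{\det}f_*\mathcal{O}_X(D))\ge\dim V^0$ by a more elaborate Poincar\'e reducibility and restriction argument (Theorem~\ref{2}), but that is because it simultaneously establishes a stronger statement about $\kappa(X,K_X+\Delta)$; for the bare inequality your direct route is fine.

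The first inequality $\kappa(V)\ge\kappa(A,\widehat{\det}f_*\omega_X^{\otimes m})$ is where your proposal has a genuine gap. You want an injection $\widehat{\det}f_*\omega_X^{\otimes m}\hookrightarrow\mathcal{O}_A(cD)$, and you appeal to ``Viehweg-style weak positivity'' with the polar contribution ``absorbed by a sufficient multiple of $D$''. Weak positivity does not produce such an inclusion: it is a statement about generic global generation of symmetric powers after ample twists and says nothing about bounding $\widehat{\det}f_*\omega_X^{\otimes m}$ \emph{above} by the discriminant. Indeed, when $\widehat{\det}f_*\omega_X^{\otimes m}$ is big, your inclusion would force $D$ to be big, and that is precisely the content of the hyperbolicity theorem of Popa--Schnell (Theorem~\ref{thm:PS3} in the paper), which rests on Viehweg--Zuo sheaves, Campana--P\u aun, and Hodge modules---none of which you invoke. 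The paper's proof of Theorem~\ref{1} uses this result as the key input: in the big case it applies directly, and in the non-big case one writes a multiple of $\widehat{\det}f_*\omega_X^{\otimes m}$ as $p^*H$ for a fibration $p\colon A\to B$ and ample $H$, splits $A\sim C\times K$ via Poincar\'e reducibility, restricts to a general fiber $C_k\cong B$ where the determinant becomes big, applies Popa--Schnell there, and finally uses subadditivity (Cao--P\u aun) for the projection $C\times K\to K$ to propagate the conclusion back to $A$. Your proposal is missing both the hyperbolicity input and this reduction-to-the-big-case mechanism.
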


The line bundle $\widehat{\det} f_* \omega_{X}^{\otimes m}$ is the reflexive hull of $\det f_* \omega_{X}^{\otimes m}$. Given a smooth quasi-projective variety $V$, $\kappa(V)$ denotes the log Kodaira dimension, defined as follows: for any smooth projective compactification $Y$ of $V$ such that $D = Y\setminus V$ is a divisor with simple normal crossing support, we have $\kappa (V) = \kappa (Y, K_Y+D)$. Let $\mathcal{F}$ be a coherent sheaf on an abelian variety $A$. The cohomological support locus $V^0(A, \mathcal{F})$ is defined by
$$V^0(A, \mathcal{F})=\{\alpha\in\Pic^0(A)\mid\dim H^0(A, \mathcal{F}\otimes\alpha)>0\},$$
see also Definition \ref{csl}. If $\mathcal{F}$ admits a finite direct sum decomposition
$$\mathcal{F}\cong\bigoplus_{i\in I}(\alpha_i\otimes p_i^*\mathcal{F}_i),$$
where each $A_i$ is an abelian variety, each $p_i\colon A\to A_i$ is a fibration, each $\mathcal{F}_i$ is a nonzero M-regular coherent sheaf on $A_i$, and each $\alpha_i\in\Pic^0(A)$ is a torsion line bundle, then we can characterize $V^0(A, \mathcal{F})$ using this decomposition, and we have
$$\dim V^0(A, \mathcal{F})=\max_{i\in I} \dim A_i.$$
See Definition \ref{GVM} for the definition of M-regular coherent sheaves. We will use this observation in the proofs of our main theorems. The decomposition above is called the Chen--Jiang decomposition of $\mathcal{F}$. It is known that pushforwards of pluricanonical bundles under morphisms to abelian varieties have the Chen--Jiang decomposition by \cite{CJ18, PPS17, LPS20} in increasing generality, and pushforwards of klt pairs under morphisms to abelian varieties have the Chen--Jiang decomposition, as proved independently in \cite{Jia21} and \cite{Men21}.

If there exists a positive integer $m$ such that $\kappa(A, \widehat{\det} f_* \omega_{X}^{\otimes m})=\dim A$, it is known that $\kappa(V)=\dim A$ by \cite[Theorem 2.4]{MP21} (see Theorem \ref{thm:PS3}) which is a consequence of \cite[Theorems 4.1 and 3.5]{PS17}. The first part of the inequality in Theorem \ref{hp} is a generalization of this fact when $\widehat{\det} f_* \omega_{X}^{\otimes m}$ is not necessarily big. In a different direction, by letting $\kappa(V)=0$ in Theorem \ref{hp}, we can recover \cite[Theorem B]{MP21} which gives the structures of pushforwards of pluricanonical bundles of smooth projective varieties under surjective morphisms to abelian varieties when the morphisms are smooth away from a closed set of codimension at least $2$ in the abelian varieties.

By estimating the dimension of $V^0(A, f_*\omega_X^{\otimes m})$, we have the following corollary of Theorem \ref{hp}.

\begin{coro}\label{hpc}
Let $g\colon X\to Y$ be a smooth model of the Iitaka fibration of a smooth projective variety $X$ with general fiber $G$ where $Y$ is a smooth projective variety, and $f\colon X \to A$ a fibration to an abelian variety $A$ where $f$ is smooth over an open set $V\subseteq A$. Then
$$\kappa(V)\ge\dim A-q(G).$$
\end{coro}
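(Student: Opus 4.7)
The plan is to apply Theorem \ref{hp} and bound $\dim V^0(A, f_*\omega_X^{\otimes m})$ from below by $\dim A - q(G)$ for some $m$. Let $B \subseteq A$ be the smallest abelian subvariety such that $f(G)$ is contained in a translate of $B$; since $f|_G\colon G \to A$ factors, up to translation, through the Albanese map $G \to \Alb(G)$, we have $\dim B \leq q(G)$. Setting $\pi\colon A \to A' := A/B$ and $h := \pi \circ f\colon X \to A'$, the map $h$ is a fibration because $\pi$ is a surjective morphism of abelian varieties with connected fibers. By the construction of $B$, the morphism $h$ contracts the general Iitaka fiber $G$ to a point, so by rigidity it factors rationally through $g$; after a suitable birational modification of $Y$ and a compatible modification of $X$, I obtain a surjective morphism $\phi\colon Y \to A'$ with $h = \phi \circ g$, and the new $Y$ remains of general type.

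The heart of the proof is the claim that $V^0(A', h_*\omega_X^{\otimes m}) = \Pic^0(A')$ for some $m$. Granted this, the projection formula gives $\pi^*\Pic^0(A') \subseteq V^0(A, f_*\omega_X^{\otimes m})$, and since $\pi^*$ is injective (because $\pi$ has connected fibers), $\pi^*\Pic^0(A')$ is an abelian subvariety of $\Pic^0(A)$ of dimension $\dim A'$. Therefore $\dim V^0(A, f_*\omega_X^{\otimes m}) \geq \dim A' = \dim A - \dim B \geq \dim A - q(G)$, and Theorem \ref{hp} then yields the corollary.

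To establish the claim I would exploit the general-type nature of $Y$. Since $\omega_Y$ is big, a standard Kodaira-vanishing argument shows that for $k$ sufficiently large and every $\beta \in \Pic^0(A')$, $H^0(Y, \omega_Y^{\otimes k} \otimes \phi^*\beta) > 0$, as $\omega_Y^{\otimes k} \otimes \phi^*\beta$ is numerically equivalent to the big line bundle $\omega_Y^{\otimes k}$. The step I expect to be the main obstacle is to transport these sections from $Y$ up to $X$ in order to produce nonzero elements of $H^0(X, \omega_X^{\otimes m} \otimes h^*\beta) = H^0(A', h_*\omega_X^{\otimes m} \otimes \beta)$. For this I would invoke the canonical bundle formula for the Iitaka fibration (Kawamata, Fujino--Mori), which for $m$ large and divisible expresses $mK_X$ as the pullback under $g$ of an effective $\mathbb{Q}$-divisor on $Y$ whose base part is a multiple of $K_Y$; multiplying pulled-back sections on $Y$ by sections cutting out the remaining effective part then produces the required sections on $X$.
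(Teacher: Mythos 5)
Your strategy differs from the paper's: the paper obtains the corollary in one line by combining Theorem \ref{hp} with Theorem \ref{IT0}, whose Chen--Jiang decomposition statement already contains the bound $\dim V^0(A,f_*\omega_X^{\otimes m})=\dim B\ge \dim A-q(G)$; you instead try to reprove that dimension bound directly by passing to the quotient $A'=A/B$, where $B$ is the abelian subvariety spanned by $f(G)$. Several of your steps are fine: $\dim B\le q(G)$ because $f|_G$ factors through $\Alb(G)$; the rigidity lemma (together with the fact that a rational map from a smooth variety to an abelian variety is a morphism) gives the factorization $h=\phi\circ g$ without any birational modification; and the projection formula correctly shows that $V^0(A',h_*\omega_X^{\otimes m})=\Pic^0(A')$ would force $\dim V^0(A,f_*\omega_X^{\otimes m})\ge \dim A'$.

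The genuine gap is in the key nonvanishing claim. You assert that $Y$ is of general type and deduce $H^0(Y,\omega_Y^{\otimes k}\otimes\phi^*\beta)\neq 0$ from bigness of $\omega_Y$. The base of an Iitaka fibration is not of general type in general: a properly elliptic surface $S\to\mathbb{P}^1$ with $\kappa(S)=1$ has Iitaka base $\mathbb{P}^1$, and no birational modification changes that. Your appeal to the canonical bundle formula does not repair this, because the formula is misstated: $mK_X$ is not the pullback of an effective divisor of the form $mK_Y+(\text{effective})$; the Fujino--Mori formula reads $bK_X\sim g^*(b(K_Y+B_Y+M_Y))+(\text{correction terms})$, where the moduli part $M_Y$ is only nef and need not be effective, so one cannot "multiply by sections cutting out the remaining effective part." What is actually true, and what your argument needs, is that the log canonical divisor $K_Y+B_Y+M_Y$ is big --- because $\kappa(Y,K_Y+B_Y+M_Y)=\kappa(X)=\dim Y$ --- together with the isomorphism $H^0(X,mbK_X+g^*L)\cong H^0(Y,\lfloor mb(K_Y+B_Y+M_Y)\rfloor+L)$ for line bundles $L$ pulled back from $Y$. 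With that substitution the uniform nonvanishing over $\beta\in\Pic^0(A')$ follows and the proof closes, but as written the crucial statement $H^0(X,\omega_X^{\otimes m}\otimes h^*\beta)\neq 0$ for all $\beta$ is not established; and at that point you would essentially be reproving the relevant part of Theorem \ref{IT0}.
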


Given a projective variety $G$, $q(G)$ denotes the irregularity of $G$, see Definition \ref{irregularity}. A projective variety $G$ is said to be regular if $q(G)=0$. If the Iitaka fibration $g$ has regular general fiber, then $\kappa(V)=\dim A$ by Corollary \ref{hpc} and thus $f$ is not smooth unless $A$ is a point. Thus we have the following corollary.

\begin{coro}\label{nns}
Let $g\colon X\to Y$ be a smooth model of the Iitaka fibration of a smooth projective variety $X$ with general fiber $G$ where $Y$ is a smooth projective variety. If $G$ is regular, then $X$ has no nontrivial smooth morphisms to an abelian variety.
\end{coro}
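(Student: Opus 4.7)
The plan is to apply Corollary \ref{hpc} after a short reduction that turns an arbitrary smooth morphism into a smooth fibration to an abelian variety. Suppose for contradiction that $f\colon X\to A$ is a smooth morphism to an abelian variety with $\dim A>0$; I will derive a contradiction.

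First I would convert $f$ into a fibration. Because $A$ is connected, $f$ smooth is open while $f$ proper is closed, so $f(X)=A$ and $f$ is surjective. Let $X\xrightarrow{f'}Z\xrightarrow{\pi}A$ be the Stein factorization of $f$, with $f'$ a fibration and $\pi$ finite. Smoothness and properness of $f$, together with cohomology and base change, show that $f_*\mathcal{O}_X$ is a locally free sheaf of $\mathcal{O}_A$-algebras of constant rank, which is equivalent to $\pi$ being \'etale. Since a connected \'etale cover of an abelian variety carries the structure of an abelian variety (after choosing a base point above $0\in A$), $Z$ is an abelian variety, and $f'\colon X\to Z$ is a smooth fibration.

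Next I would apply Corollary \ref{hpc} to the smooth fibration $f'\colon X\to Z$. Since $f'$ is smooth everywhere, the smooth locus is $V=Z$ and $\kappa(V)=\kappa(Z)=0$, as $K_Z$ is trivial and the boundary is empty. On the other hand $q(G)=0$ by hypothesis, so Corollary \ref{hpc} gives
$$0=\kappa(V)\ge \dim Z - q(G)=\dim Z,$$
forcing $\dim Z=0$ and hence $\dim A=0$, contradicting the assumption that $f$ is nontrivial.

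The principal technical point is the identification of the Stein factorization of a smooth proper morphism to an abelian variety with an \'etale cover of that abelian variety; once this reduction is in place, Corollary \ref{hpc} does all the remaining work and no new estimates are needed.
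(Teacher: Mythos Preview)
Your proposal is correct and follows essentially the same route as the paper: Stein-factorize the smooth morphism, observe that the finite part is \'etale so the intermediate variety is again an abelian variety and the fibration part is smooth, then apply Corollary~\ref{hpc} with $V$ equal to the whole base (so $\kappa(V)=0$) and $q(G)=0$ to force $\dim A=0$. The only difference is that you spell out the \'etaleness via local freeness of $f_*\mathcal{O}_X$, whereas the paper just asserts that smoothness of $f$ forces $h$ smooth and $\varphi$ \'etale.
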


Corollary \ref{nns} implies that there are no nontrivial smooth morphisms from a projective variety of general type to an abelian variety which was proved in \cite{VZ01} when the base is an elliptic curve and in \cite{HK05} and \cite{PS14a} in general, see also \cite{MP21} for related results.

We also have a quick corollary of Theorem \ref{hp} if the Albanese morphism of $X$ is a fibration. In Corollary \ref{hpc}, we let $f$ be the Albanese morphism of $X$. Then we deduce that
$$\kappa(V)\ge q(Y)$$
by Theorem \ref{hp} and \cite[Theorem D]{LPS20}.

By \cite[Theorem 1.1]{Kaw85a}, Theorem \ref{hp} implies a special case of the Kebekus--Kov\'acs conjecture when the base $V$ compactifies to an abelian variety. This conjecture bounds $\Var(f)$ from above by the log Kodaira dimension $\kappa (V)$ assuming that the general fiber of $f$ has a good minimal model and has recently been proved in \cite{Taj20}. For the definition of the variation $\Var(f)$, see \cite[Section 1]{Kaw85a}. In private communication from Mihnea Popa, he proposed the following conjecture.

\begin{conj}[\cite{Pop22}]
Let $f\colon X \to A$ be a fibration from a smooth projective variety $X$ to an abelian variety $A$. Then
$$\dim V^0(A, f_*\omega_X^{\otimes m})\ge\Var(f)$$
for every integer $m>1$ such that $f_*\omega_X^{\otimes m}\neq0$. 
\end{conj}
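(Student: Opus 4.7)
The plan is to combine Theorem \ref{hp} with Viehweg-type positivity for the determinant of the pluricanonical pushforward, via a descent to the maximal-variation case. By Theorem \ref{hp}, for every integer $m>1$ with $f_*\omega_X^{\otimes m}\neq 0$,
\[
\dim V^0(A, f_*\omega_X^{\otimes m})=\kappa\bigl(A,\widehat{\det} f_*\omega_X^{\otimes m}\bigr),
\]
so the conjecture is equivalent to the Viehweg-type inequality $\kappa(A,\widehat{\det} f_*\omega_X^{\otimes m})\ge\Var(f)$ for every such $m$. The task is therefore to bound $\Var(f)$ from above by the Iitaka dimension of the determinant of the pushforward.

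First, I would reduce to the maximal-variation case by base change. One seeks a surjection $q\colon A\to B$ of abelian varieties with $\dim B=\Var(f)$ and a fibration $f_0\colon X_0\to B$ with $\Var(f_0)=\dim B$ such that $X$ is birational, over $A$, to a base change of $X_0$ along $q$ (possibly after an \'etale cover of $A$). For fibrations over abelian varieties this should follow from the Stein factorization of the moduli map combined with the quotient-group structure of $A$. Under this reduction $f_*\omega_X^{\otimes m}$ is birationally the pullback $q^*(f_0)_*\omega_{X_0}^{\otimes m}$ up to torsion twists (using the compatibility of the Chen--Jiang decomposition under pullback, as in \cite{LPS20}), so by the pullback behavior of $V^0$ one has $\dim V^0(A, f_*\omega_X^{\otimes m})=\dim V^0(B, (f_0)_*\omega_{X_0}^{\otimes m})$. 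It thus suffices to prove the conjecture when $\Var(f_0)=\dim B$.

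In the maximal-variation case, Viehweg's theorem (in its form for fibrations over abelian varieties, and its refinements in \cite{PS17}) gives that $\det (f_0)_*\omega_{X_0}^{\otimes m_0}$ is big on $B$ for some sufficiently divisible $m_0$. Hence Theorem \ref{hp} yields $\dim V^0(B,(f_0)_*\omega_{X_0}^{\otimes m_0})=\dim B$ for that $m_0$. To extend this to every $m>1$, I would appeal once more to the Chen--Jiang decomposition of $(f_0)_*\omega_{X_0}^{\otimes m}$: if for some $m$ every summand were pulled back from a proper quotient of $B$, then passing to a common such quotient would yield a further birational descent of $f_0$, contradicting $\Var(f_0)=\dim B$. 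This forces at least one summand to be M-regular on $B$ itself, giving $\dim V^0(B,(f_0)_*\omega_{X_0}^{\otimes m})=\dim B$ for every $m>1$.

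The principal obstacle is the descent step used in both reductions: showing that if every summand of the Chen--Jiang decomposition of a pluricanonical pushforward over an abelian variety is pulled back from a proper quotient, then the fibration itself birationally descends to a fibration over such a quotient. This is a rigidity statement in the spirit of Viehweg--Zuo \cite{VZ01} and Popa--Schnell \cite{PS17}, tying the positivity of the pushforward to the moduli map of the fibration; carrying it out uniformly in $m$, as needed for the all-$m>1$ form of the conjecture, is the subtlest part of the argument.
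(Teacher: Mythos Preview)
The statement you are attempting to prove is recorded in the paper as a \emph{conjecture}; the paper does not prove it unconditionally. What the paper does establish is Corollary~\ref{var}, namely the same inequality under the extra hypothesis that the general fiber of $f$ has a good minimal model, and that proof is considerably shorter than your outline. By Theorem~\ref{IT0} there is a single quotient $p\colon A\to B$ such that for \emph{every} integer $m>1$ the sheaf $f_*\omega_X^{\otimes m}$ decomposes into torsion twists of $\IT$-sheaves pulled back from $B$; hence $\dim V^0(A,f_*\omega_X^{\otimes m})=\dim B$ is independent of $m$. Kawamata's theorem \cite[Theorem~1.1]{Kaw85a} (this is where the good minimal model hypothesis enters) then gives $\kappa(A,\widehat{\det} f_*\omega_X^{\otimes k})\ge\Var(f)$ for some $k>1$, and the equality $\kappa=\dim V^0$ from Theorem~\ref{hp} transfers this to all $m>1$. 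No descent or moduli-map reduction is needed.

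Your proposal, by contrast, has a genuine gap at exactly the point where the paper invokes the good minimal model assumption. In your Step~3 you appeal to ``Viehweg's theorem'' for the bigness of $\det(f_0)_*\omega_{X_0}^{\otimes m_0}$ in the maximal-variation case, but this bigness statement is itself only known under the good minimal model hypothesis (or for general type fibers); it is precisely Viehweg's $Q_{n,m}$ conjecture, settled by Kawamata in \cite{Kaw85a} conditionally. The refinements in \cite{PS17} concern hyperbolicity consequences \emph{of} bigness, not an unconditional proof of bigness. So even granting your reduction, you land back on the same open problem. Furthermore, your Steps~2 and~4 are themselves not established: there is no reason the moduli map of a fibration over an abelian variety should factor through a quotient abelian variety of dimension $\Var(f)$, nor that a fibration whose pluricanonical pushforwards are pulled back from a quotient must birationally descend to that quotient. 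You correctly flag the latter as ``the principal obstacle,'' but it is worth stressing that resolving it would essentially amount to proving the conjecture, not merely a technical step toward it.
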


In the case when $\kappa(X)=0$ and $f$ is the Albanese morphism of $X$, this conjecture is essentially equivalent to Ueno's Conjecture K, predicting that up to birational equivalence $f$ becomes a projection onto a factor after an \'etale base change. This is due to the fact that $f_*\omega_X^{\otimes m}$ is a torsion line bundle on $A$ for every positive integer $m$ such that $f_*\omega_X^{\otimes m}\neq0$ by \cite[Theorem 5.2]{HPS18}. In the following corollary, we give an answer to his conjecture assuming that the general fiber of $f$ has a good minimal model.

\begin{coro}\label{var}
Let $f\colon X \to A$ be a fibration from a smooth projective variety $X$ to an abelian variety $A$. Assume that the general fiber of $f$ has a good minimal model. Then
$$\dim V^0(A, f_*\omega_X^{\otimes m})\ge\Var(f)$$
for every integer $m>1$ such that $f_*\omega_X^{\otimes m}\neq0$. Moreover, if $g\colon X\to Y$ is a smooth model of the Iitaka fibration of $X$ where $Y$ is a smooth projective variety, then
$$q(Y)\ge\dim V^0(A, f_*\omega_X^{\otimes m})\ge\Var(f)$$
for every integer $m>1$ such that $f_*\omega_X^{\otimes m}\neq0$.
\end{coro}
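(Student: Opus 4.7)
The plan is to decouple Corollary \ref{var} into its two inequalities and handle each via complementary inputs.

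For $\dim V^0(A, f_* \omega_X^{\otimes m}) \ge \Var(f)$: Under the hypothesis that the general fibre of $f$ admits a good minimal model, I would invoke Kawamata's result \cite[Theorem 1.1]{Kaw85a}---the ingredient already cited in the excerpt (in combination with Theorem \ref{hp}) to give the Kebekus--Kov\'acs conjecture over an abelian base---to obtain
$$\Var(f) \le \kappa(A, \widehat{\det} f_* \omega_X^{\otimes m})$$
for every $m$ with $f_*\omega_X^{\otimes m} \neq 0$. The second assertion of Theorem \ref{hp} then upgrades the right-hand side to the equality $\kappa(A, \widehat{\det} f_* \omega_X^{\otimes m}) = \dim V^0(A, f_* \omega_X^{\otimes m})$ for $m > 1$, and chaining the two relations gives the claimed bound.

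For $q(Y) \ge \dim V^0(A, f_* \omega_X^{\otimes m})$: I would factor $f$ through the Albanese morphism of $X$. After suitable translation, write $f = h \circ \mathrm{alb}_X$ with $h\colon \Alb(X) \to A$ a surjective homomorphism of abelian varieties; surjectivity makes the dual map $h^*\colon \Pic^0(A) \to \Pic^0(\Alb(X))$ a closed immersion. The projection formula gives $H^0(A, f_* \omega_X^{\otimes m} \otimes \alpha) = H^0(\Alb(X), \mathrm{alb}_{X,*} \omega_X^{\otimes m} \otimes h^*\alpha)$, and consequently
$$V^0(A, f_* \omega_X^{\otimes m}) = (h^*)^{-1}\bigl(V^0(\Alb(X), \mathrm{alb}_{X,*} \omega_X^{\otimes m})\bigr),$$
yielding the reduction
$$\dim V^0(A, f_* \omega_X^{\otimes m}) \le \dim V^0(\Alb(X), \mathrm{alb}_{X,*} \omega_X^{\otimes m}).$$
It then suffices to bound the right-hand side by $q(Y)$, which I would extract from \cite[Theorem D]{LPS20}---the very input used in the paragraph preceding the corollary to deduce $\kappa(V) \ge q(Y)$ in the Albanese case: the theorem analyses the Chen--Jiang components of $\mathrm{alb}_{X,*} \omega_X^{\otimes m}$ through the natural map $\Alb(X) \to \Alb(Y)$, capping the abelian-variety quotients appearing by $q(Y)$.

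The main obstacle is uniformity in $m$. Both Kawamata's theorem and the upper-bound direction of \cite[Theorem D]{LPS20} must be applied for every $m > 1$ such that $f_*\omega_X^{\otimes m} \neq 0$, whereas classical statements of these results are often phrased for sufficiently divisible $m$. Verifying that the modern Chen--Jiang decomposition theory for pluricanonical pushforwards to abelian varieties \cite{CJ18, PPS17, LPS20} covers the full range of $m$ in the hypothesis---and that the Albanese reduction is compatible with this---is the delicate bookkeeping point.
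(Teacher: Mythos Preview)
Your strategy for the first inequality matches the paper's: combine Kawamata's theorem \cite[Theorem~1.1]{Kaw85a} with the equality $\kappa(A,\widehat{\det}f_*\omega_X^{\otimes m})=\dim V^0(A,f_*\omega_X^{\otimes m})$ from Theorem~\ref{hp}. However, the obstacle you flag is a genuine gap, not bookkeeping. Kawamata's theorem only furnishes \emph{some} integer $k>1$ with $\kappa(A,\widehat{\det}f_*\omega_X^{\otimes k})\ge\Var(f)$; it does not give the bound for every $m$. The paper closes this gap via Theorem~\ref{IT0}: for $m>1$ the Chen--Jiang decomposition of $f_*\omega_X^{\otimes m}$ is governed by a \emph{single} quotient $p\colon A\to B$ independent of $m$, so that $\dim V^0(A,f_*\omega_X^{\otimes m})=\dim B$ for every such $m$ with nonzero pushforward. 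Once this constancy is established, the inequality for the one value $k$ coming from Kawamata transfers to all $m>1$.

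For the second inequality your approach goes astray. The reference \cite[Theorem~D]{LPS20} is invoked in the paper (in the paragraph you cite) to produce the \emph{lower} bound $\dim V^0(\Alb(X),\mathrm{alb}_{X,*}\omega_X^{\otimes m})\ge q(Y)$, which is how one deduces $\kappa(V)\ge q(Y)$ from Theorem~\ref{hp}; it does not ``cap'' the abelian quotients by $q(Y)$ in the direction you need. The paper instead obtains $q(Y)\ge\dim V^0(A,f_*\omega_X^{\otimes m})$ directly from the ``Moreover'' clause of Theorem~\ref{IT0}, applied to the given surjective morphism $f\colon X\to A$ itself: that clause yields $q(Y)\ge\dim B$, and $\dim B=\dim V^0$ by the first paragraph. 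No passage through the Albanese morphism is required, and indeed $\mathrm{alb}_X$ need not be a fibration here, so the Albanese-based statements you would like to invoke may not even apply. Both gaps in your proposal are thus filled by the same missing ingredient, Theorem~\ref{IT0}.
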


In a different but related direction, the next theorem strengthens the result on the subadditivity of Kodaira dimension of fibrations over abelian varieties by \cite{CP17} (see also \cite{HPS18}).

\begin{thm}\label{sskd}
Let $f\colon X \to A$ be a fibration from a klt pair $(X, \Delta)$ to an abelian variety $A$, $F$ the general fiber of $f$, $m\geq1$ a rational number, and $D$ a Cartier divisor on $X$ such that $D\sim_{\Q}m(K_X+\Delta)$. Then
$$\kappa(X, K_X+\Delta)\ge\kappa(F, K_F+\Delta|_F)+\dim V^0(A, f_*\mathcal{O}_X(D)).$$
If $m>1$, then
$$\kappa(X, K_X+\Delta)\ge\kappa(F, K_F+\Delta|_F)+\kappa(A, \widehat{\det} f_*\mathcal{O}_X(D)).$$
\end{thm}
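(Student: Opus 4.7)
The plan is to combine the Chen--Jiang decomposition of $f_*\mathcal{O}_X(D)$ with a Viehweg-style subadditivity argument, after first reducing, via an isogeny of $A$, to the case where the decomposition contains a genuine pullback as a direct summand. Since $(X,\Delta)$ is klt and $D\sim_{\Q}m(K_X+\Delta)$ is Cartier, the klt version of the Chen--Jiang decomposition from \cite{Jia21, Men21} gives
$$f_*\mathcal{O}_X(D)\cong\bigoplus_{i\in I}(\alpha_i\otimes p_i^*\mathcal{F}_i),$$
with each $p_i\colon A\to A_i$ a fibration of abelian varieties, each $\mathcal{F}_i$ nonzero M-regular on $A_i$, and each $\alpha_i$ torsion in $\Pic^0(A)$. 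Setting $d=\dim V^0(A,f_*\mathcal{O}_X(D))=\max_i\dim A_i$, I pick an index $i_0$ with $\dim A_{i_0}=d$. Taking an isogeny $\varphi\colon A'\to A$ that simultaneously trivializes every $\alpha_i$, resolving $X\times_A A'$, and pulling back $\Delta$ and $D$, I reduce to the situation where $f_*\mathcal{O}_X(D)$ contains $p_{i_0}^*\mathcal{F}_{i_0}$ as a direct summand; this \'etale-type base change preserves the Kodaira dimensions of $(X,K_X+\Delta)$ and $(F,K_F+\Delta|_F)$, as well as the invariant $d$.

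Next, consider $h=p_{i_0}\circ f\colon X\to A_{i_0}$, whose general fiber $F'$ is itself a fibration over $p_{i_0}^{-1}(\mathrm{pt})$ with general fiber $F$. The pushforward $h_*\mathcal{O}_X(D)$ then contains $\mathcal{F}_{i_0}$ as a direct summand, and since M-regular sheaves on abelian varieties have big reflexive determinant, $\kappa(A_{i_0},\widehat{\det}\mathcal{F}_{i_0})=d$. The central step is a Viehweg-style subadditivity for $h$: using the weak positivity of $h_*\mathcal{O}_X(kD)$ afforded by the klt hypothesis and the relation $D\sim_\Q m(K_X+\Delta)$, together with the bigness of $\widehat{\det}\mathcal{F}_{i_0}$, I extract enough sections of $k(K_X+\Delta)$ for some positive $k$ to obtain
$$\kappa(X,K_X+\Delta)\ge\kappa(F',K_{F'}+\Delta|_{F'})+d.$$
Applying the subadditivity for fibrations over abelian varieties from \cite{CP17, HPS18} to $F'\to p_{i_0}^{-1}(\mathrm{pt})$ then gives $\kappa(F',K_{F'}+\Delta|_{F'})\ge\kappa(F,K_F+\Delta|_F)$, completing the first inequality.

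For the case $m>1$, the Chen--Jiang analysis used to prove $\kappa(A,\widehat{\det}f_*\omega_X^{\otimes m})=\dim V^0(A,f_*\omega_X^{\otimes m})$ in Theorem \ref{hp} extends verbatim to the klt setting, yielding $\kappa(A,\widehat{\det}f_*\mathcal{O}_X(D))=\dim V^0(A,f_*\mathcal{O}_X(D))$, so the second inequality reduces immediately to the first. The main obstacle I anticipate is the Viehweg-type subadditivity step in the previous paragraph: converting the M-regularity of $\mathcal{F}_{i_0}$ into enough sections of $k(K_X+\Delta)$ on $X$ to produce an Iitaka-type rational map whose image has dimension $\kappa(F,K_F+\Delta|_F)+d$. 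I expect this to go through a standard fiber-product construction together with a multiplication-of-sections argument, exploiting the M-regular summand and the weak positivity framework of \cite{CP17, HPS18}.
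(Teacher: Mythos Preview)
Your outline is essentially correct and very close in spirit to the paper's argument; the two proofs are organized dually. Both start from the Chen--Jiang decomposition, identify $\dim V^0(A,f_*\mathcal{O}_X(D))=\max_i\dim A_i$, and fix an index $j$ realizing the maximum. The paper then uses Poincar\'e complete reducibility to produce an isogeny $C\times K\to A$ with $K=\ker p_j$, restricts everything to a general slice $C_k$, and obtains a fibration $f'_k\colon X'_k\to C_k$ whose general fiber is still $F$ and whose pushforward of $D$ has the ample sheaf $\psi_k^*\mathcal{F}_j$ (pulled back along the isogeny $\psi_k\colon C_k\to A_j$) as a direct summand. The paper applies the elementary Lemma~\ref{po} on the slice to get $\kappa(X'_k,D'|_{X'_k})=\kappa(F,D|_F)+\dim A_j$, and then invokes \cite{CP17} once, for the projection to $K$, to lift this back to $X$. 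You instead compose with $p_j$ to get $h\colon X\to A_j$, find $\mathcal{F}_j$ as a summand of $h_*\mathcal{O}_X(D)$, aim for $\kappa(X,D)\ge\kappa(F',D|_{F'})+d$, and then apply \cite{CP17} to the fiber $F'\to K$. So the paper slices the base and applies \cite{CP17} upstairs, while you quotient the base and apply \cite{CP17} downstairs; both are valid.

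The step you flag as the ``main obstacle'' is in fact not one. M-regular sheaves on abelian varieties are \emph{ample} (via \cite{PP03} and \cite{Deb06}), not merely of big determinant, so the paper's Lemma~\ref{po} applies directly to $h$: an ample direct summand in $h_*\mathcal{O}_X(D)$ already forces $\kappa(X,D)=\kappa(F',D|_{F'})+\dim A_j$, with no fiber-product construction or weak-positivity input required beyond the short multiplication-of-sections argument in that lemma. This is exactly the mechanism the paper exploits, and it dissolves the Viehweg-style step you anticipate. (A minor point: since the isogeny $\varphi$ is \'etale, $X\times_A A'$ is already klt and there is nothing to resolve.) For $m>1$, your reduction via the equality $\kappa(A,\widehat{\det}f_*\mathcal{O}_X(D))=\dim V^0(A,f_*\mathcal{O}_X(D))$ is precisely the paper's Lemma~\ref{equality}.
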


If $f_*\mathcal{O}_X(D)\neq0$, $V^0(A, f_*\mathcal{O}_X(D))$ is not empty since $f_*\mathcal{O}_X(D)$ is a GV-sheaf by \cite[Corollary 4.1]{Men21}. We have the following corollary of Theorem \ref{sskd}. If $\kappa(X, K_X+\Delta)=0$ and $f_*\mathcal{O}_X(D)\neq0$, then $\dim V^0(A, f_*\mathcal{O}_X(D))=0$ by Theorem \ref{sskd} and we deduce that $f_*\mathcal{O}_X(D)$ is a torsion line bundle on $A$ by employing its Chen--Jiang decomposition, see \cite[Theorem 5.2]{HPS18} for the case of pushforwards of pluricanonical bundles of smooth projective varieties under fibrations to abelian varieties.

By estimating the dimension of $V^0(A, f_*\mathcal{O}_X(D))$, we have the following corollary of Theorem \ref{sskd}.

\begin{coro}\label{sskdc}
Let $g\colon X\to Y$ be a smooth model of the Iitaka fibration associated to $K_X+\Delta$ with general fiber $G$ where $(X, \Delta)$ is a klt pair and $Y$ is a smooth projective variety, and $f\colon X \to A$ a fibration to an abelian variety $A$ with general fiber $F$. Then 
$$\kappa(X, K_X+\Delta)\ge\kappa(F, K_F+\Delta|_F)+\dim A-q(G).$$
\end{coro}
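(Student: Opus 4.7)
The plan is to apply Theorem \ref{sskd} and then bound $\dim V^0(A, f_*\mathcal{O}_X(D))$ from below by $\dim A - q(G)$ for a suitably chosen Cartier divisor $D \sim_{\Q} m(K_X+\Delta)$, which will immediately give the stated inequality.

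For the geometric input I would study the image of a general Iitaka fiber $G_y$ inside $A$. Since $f|_{G_y}\colon G_y \to A$ factors, up to translation, through the Albanese $G_y \to \Alb(G_y)$, the image $f(G_y)$ lies in a translate of an abelian subvariety $B \subseteq A$ of dimension at most $q(G)$. Routing everything through $\Alb(X)$ (on which $f$ factors) shows that a single $B$ works for every general $y$. Letting $\pi\colon A \to A' := A/B$, the composition $\pi \circ f$ is constant on general fibers of $g$ and, after possibly replacing $Y$ by a higher smooth birational model, factors as $\pi \circ f = \bar{f} \circ g$ for a surjective morphism $\bar{f}\colon Y \to A'$ with $\dim A' \ge \dim A - q(G)$.

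Next, for every $\beta \in \Pic^0(A')$ the projection formula yields
$$H^0(A, f_*\mathcal{O}_X(D) \otimes \pi^*\beta) = H^0\bigl(Y, g_*\mathcal{O}_X(D) \otimes \bar{f}^*\beta\bigr).$$
For $m$ sufficiently divisible, $|D|$ defines a rational map birational to $g$, so $g_*\mathcal{O}_X(D)$ contains a rank-one subsheaf $L$ on $Y$ (pulled back from an ample bundle on the Iitaka image, after a further birational modification of $Y$ if needed) whose positivity grows with $m$. Standard asymptotic Riemann--Roch together with Serre/Kodaira vanishing, applied uniformly over the compact family $\pi^*\Pic^0(A')$, then gives $H^0(Y, L \otimes \bar{f}^*\beta) \neq 0$ for every $\beta \in \Pic^0(A')$. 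Hence $\pi^*\Pic^0(A') \subseteq V^0(A, f_*\mathcal{O}_X(D))$, and since $\pi$ is a surjective morphism of abelian varieties $\pi^*$ is injective, so
$$\dim V^0(A, f_*\mathcal{O}_X(D)) \ge \dim A' \ge \dim A - q(G).$$

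The main obstacle is the uniform nonvanishing step: one must arrange that a single choice of $D$ produces $H^0(Y, L \otimes \bar{f}^*\beta) \neq 0$ for \emph{every} $\beta \in \Pic^0(A')$ simultaneously, not only for a Zariski-dense subset. A cleaner alternative would be to work directly with the Chen--Jiang decomposition $f_*\mathcal{O}_X(D) \cong \bigoplus_i \alpha_i \otimes p_i^* \mathcal{F}_i$ and show that the summand of maximal $\dim A_i$ satisfies $\ker p_i \subseteq B$; this bypasses any explicit vanishing on $Y$ and yields $\dim V^0 = \max_i \dim A_i \ge \dim A'$ directly.
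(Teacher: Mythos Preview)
Your overall plan—apply Theorem \ref{sskd} and then bound $\dim V^0(A, f_*\mathcal{O}_X(D))$ from below by $\dim A - q(G)$—is exactly the paper's approach. The paper's proof is a one-line citation: the required lower bound on $\dim V^0$ is precisely the content of Theorem \ref{IT0}, which produces a single fibration $p\colon A\to B$ with $\dim B\ge \dim A - q(G)$ through which \emph{every} summand of the Chen--Jiang decomposition of $f_*\mathcal{O}_X(D)$ factors (for $m>1$). This is your ``cleaner alternative'' at the end, and it is the route the paper actually takes, so you should simply invoke Theorem \ref{IT0} and be done.

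Your direct argument via nonvanishing on $Y$ is more than you need, but it is not wrong. Two remarks on it. First, the factorisation $\pi\circ f=\bar f\circ g$ does not require blowing up $Y$: since $\pi\circ f$ is constant on general fibres of $g$, it factors rationally through $Y$, and any rational map from a smooth projective variety to an abelian variety extends to a morphism. Second, the gap you flag in the uniform nonvanishing is real but easily closed: for $m$ sufficiently divisible, $g_*\mathcal{O}_X(D)$ contains the pullback of an ample line bundle $H$ on the (normal) Iitaka image $Z$, and Fujita's vanishing theorem (rather than ordinary Serre or Kodaira vanishing) gives $H^i(Z, H\otimes N)=0$ for all $i>0$ and all nef $N$ once $H$ is positive enough; since every $\bar f^*\beta$ is numerically trivial, $h^0(Z, H\otimes\beta)=\chi(Z,H)>0$ uniformly in $\beta$. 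So the direct approach can be completed, but it reproves a special case of Theorem \ref{IT0} by hand.
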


We can rewrite the inequality in Corollary \ref{sskdc} as
$$\dim F-\kappa(F, K_F+\Delta|_F)\ge\dim G-q(G)$$
where $\dim G-q(G)$ is nonnegative since the Albanese morphism of $G$ is a fibration by $\kappa(G, K_G+\Delta|_G)=0$ and \cite[Theorem B]{Wan16}. We immediately have the following corollary.

\begin{coro}\label{bta}
Let $g\colon X\to Y$ be a smooth model of the Iitaka fibration associated to $K_X+\Delta$ with general fiber $G$ where $(X, \Delta)$ is a klt pair and $Y$ is a smooth projective variety, and $f\colon X \to A$ a fibration to an abelian variety $A$ with general fiber $F$. If $(F, \Delta|_F)$ is of log general type, then $G$ is birational to its Albanese variety.
\end{coro}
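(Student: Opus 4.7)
The plan is to deduce this as an immediate consequence of Corollary \ref{sskdc}, which in the rewritten form
$$\dim F-\kappa(F, K_F+\Delta|_F)\ge\dim G-q(G)$$
already contains essentially all the content. First I would observe that the log general type assumption on $(F, \Delta|_F)$ forces $\kappa(F, K_F+\Delta|_F)=\dim F$, so the left-hand side vanishes and one obtains $q(G)\ge\dim G$.

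Next, as noted in the paragraph preceding the statement, the general fiber $G$ of the Iitaka fibration associated to $K_X+\Delta$ satisfies $\kappa(G, K_G+\Delta|_G)=0$, so by \cite[Theorem B]{Wan16} the Albanese morphism $\mathrm{alb}_G\colon G\to\Alb(G)$ is a fibration. In particular it is surjective with connected fibers and $\dim\Alb(G)=q(G)$. Combined with $q(G)\ge\dim G$ this forces $\dim\Alb(G)=\dim G$, so $\mathrm{alb}_G$ is a generically finite surjective morphism with connected fibers, and therefore birational.

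I do not foresee any serious obstacle here, since all the substantive work has already been absorbed into Corollary \ref{sskdc} and into Wang's theorem on Albanese maps of klt pairs of Kodaira dimension zero; what remains is purely a one-line dimension count and the observation that a fibration between varieties of the same dimension is birational.
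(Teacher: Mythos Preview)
Your proof is correct and follows exactly the approach of the paper, which simply records that the statement is a direct corollary of Corollary \ref{sskdc} and \cite[Theorem B]{Wan16}. You have merely spelled out the dimension count implicit in that one-line citation.
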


Under the hypotheses of Corollary \ref{bta}, the klt pair $(G, \Delta|_G)$ has a good minimal model by \cite[Theorem 1.1]{Fuj13} since $G$ is birational to its Albanese variety. Thus the klt pair $(X, \Delta)$ has a good minimal model over $Y$ by \cite[Theorem 2.12]{HX13}. Since $g\colon X\to Y$ is a smooth model of the Iitaka fibration associated to $K_X+\Delta$, we can deduce that $(X, \Delta)$ has a good minimal model by running a $(K_X+\Delta)$-MMP over $Y$ and applying the canonical bundle formula. The main result of \cite{BC15} says that klt pairs fibered over normal projective varieties of maximal Albanese dimension with general fibers of log general type have good minimal models. By the discussion above, we give an intuitive explanation of why their result should be true.

We also have a quick corollary of Theorem \ref{sskd} if the Albanese morphism of $X$ is a fibration. In Corollary \ref{sskdc}, we let $f$ be the Albanese morphism of $X$. Then we deduce that
$$\kappa(X, K_X+\Delta)\ge\kappa(F, K_F+\Delta|_F)+q(Y)$$
by Theorem \ref{sskd} and \cite[Theorem 1.5]{Men21}.

For the proofs of the main theorems, we employ results from \cite{Men21}, techniques from \cite{MP21}, a hyperbolicity-type result from \cite{PS17}, and arguments on positivity properties of coherent sheaves.


\begin{ac}
{I would like to express my sincere gratitude to my advisor Mihnea Popa for helpful discussions and constant support. I would also like to thank Jungkai Alfred Chen for helpful discussions.}
\end{ac}

\section{Preliminaries}\label{2}

We work over $\C$. A \emph{fibration} is a projective surjective morphism with connected fibers. Let $\mathcal{F}$ be a coherent sheaf on a projective variety $X$, we denote $\mathcal{H}om_{\mathcal{O}_{X}}(\mathcal{F}, \mathcal{O}_{X})$ by $\mathcal{F}^{\vee}$.

We recall several definitions first.

\begin{defi}\label{irregularity}
Let $X$ be a smooth projective variety. The \emph{irregularity} $q(X)$ is defined as $h^1(X,\mathcal{O}_X)$. If $X$ is a projective variety, the \emph{irregularity} $q(X)$ is defined as the irregularity of any resolution of $X$.
\end{defi}

If $X$ is a normal projective variety of rational singularities, then the irregularity $q(X)$ is equal to the dimension of its Albanese variety $\Alb(X)$ since its Albanese variety coincides with the Albanese variety of any of its resolution by \cite[Proposition 2.3]{Rei83} and \cite[Lemma 8.1]{Kaw85a}.

\begin{defi}\label{csl}
Let $\mathcal{F}$ be a coherent sheaf on an abelian variety $A$. The \emph{cohomological support loci} $V_l^i(A, \mathcal{F})$ for $i\in\N$ and $l\in\N$ are defined by
$$V_l^i(A, \mathcal{F})=\{\alpha\in\Pic^0(A)\mid\dim H^i(A, \mathcal{F}\otimes\alpha)\geq l\}.$$
We use $V^i(A, \mathcal{F})$ to denote $V_1^i(A, \mathcal{F})$.
\end{defi}

\begin{defi}\label{GVM}
A coherent sheaf $\mathcal{F}$ on an abelian variety $A$
\begin{enumerate}
	\item[$\mathrm{(i)}$] is a GV-\emph{sheaf} if $\codim_{\Pic^0 (A)} V^i (A, \mathcal{F}) \ge i$ for every $i>0$.
	\item[$\mathrm{(ii)}$] is \emph{M-regular} if $\codim_{\Pic^0 (A)} V^i (A, \mathcal{F}) > i$ for every $i>0$.
	\item[$\mathrm{(iii)}$] \emph{satisfies} $\IT$ if $V^i (A, \mathcal{F})=\emptyset$ for every $i>0$.
\end{enumerate}
\end{defi}

It is known that M-regular sheaves are ample by \cite[Corollary 3.2]{Deb06}, and GV-sheaves are nef by \cite[Theorem 4.1]{PP11b}. We prove a useful lemma here by a similar method as in the proof of \cite[Theorem 4.1]{PP11b}.

\begin{lemma}\label{detn}
Let $\mathcal{F}$ be a torsion-free $\GV$-sheaf on an abelian variety $A$. Then $\widehat{\det}\mathcal{F}$ is nef.
\end{lemma}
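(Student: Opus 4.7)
The plan is to adapt the isogeny-and-twist argument used by Pareschi--Popa in the proof of \cite[Theorem 4.1]{PP11b}, where they establish that a GV-sheaf is nef, so as to extract nefness directly for the single line bundle $\widehat{\det}\mathcal{F}$. First, since $A$ is smooth and $\mathcal{F}$ is torsion-free of some rank $r$, $\widehat{\det}\mathcal{F}$ is a genuine line bundle on $A$, computable as the alternating tensor of determinants of any finite locally free resolution of $\mathcal{F}$. To check nefness, it suffices (since the nef cone is closed) to show that for every ample symmetric line bundle $H$ on $A$ and every integer $N \geq 1$, the line bundle $\mu_N^*\widehat{\det}\mathcal{F} \otimes H^{\otimes r}$ is nef, where $\mu_N : A \to A$ denotes multiplication by $N$: then its class $N^2[\widehat{\det}\mathcal{F}] + r[H]$ in $N^1(A)_{\mathbb{Q}}$, divided by $N^2$, lies in the closed nef cone, and sending $N \to \infty$ forces $[\widehat{\det}\mathcal{F}]$ itself to be nef.

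For the main positivity input I would follow \cite{PP11b}: the pullback $\mu_N^*\mathcal{F}$ is still GV (\'etale pullback preserves cohomological support loci up to covering), and the further twist $\mu_N^*\mathcal{F} \otimes H$ becomes M-regular once combined with the ample $H$, hence continuously globally generated. Taking the reflexive determinant, which commutes with pullback along \'etale maps and with twisting by invertible sheaves, yields
\[
\widehat{\det}\bigl(\mu_N^*\mathcal{F} \otimes H\bigr) \;\cong\; \mu_N^*\widehat{\det}\mathcal{F} \otimes H^{\otimes r},
\]
and the positivity of the left-hand side (as the determinant of a continuously globally generated torsion-free sheaf, generated by $r$-fold wedges of its sections on the locally free locus) transfers to nefness of the line bundle on the right; combined with the first paragraph, this yields that $\widehat{\det}\mathcal{F}$ is nef.

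The main obstacle I anticipate lies in the middle step: carefully justifying the M-regularity of $\mu_N^*\mathcal{F} \otimes H$ and the determinant identification above in the torsion-free (rather than locally free) setting. Both rely on $\mu_N$ being \'etale and on the compatibility of reflexive hulls with pullback by \'etale morphisms and with tensor product by line bundles; the sheafy positivity statement then must be translated into honest nefness of a genuine line bundle on $A$, for which the reflexive extension across the codimension-$\geq 1$ non-locally-free locus of $\mathcal{F}$ is essential.
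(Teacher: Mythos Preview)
Your plan is essentially the paper's own proof: pull back by multiplication-by-$N$, twist by an ample line bundle so that the result satisfies $\IT$ (hence is M-regular), identify the reflexive determinant with $\mu_N^*\widehat{\det}\mathcal{F}$ tensored by a power of the ample, and pass to the limit. The one spot to tighten is the positivity transfer: the claim that ``the determinant of a continuously globally generated torsion-free sheaf is nef'' is not a standard statement, and the paper instead uses the chain M-regular $\Rightarrow$ ample \cite[Corollary 3.2]{Deb06} $\Rightarrow$ big $\Rightarrow$ $\widehat{\det}$ big \cite[Lemma 3.2(iii)]{Vie83b}, which on an abelian variety gives ampleness of the determinant line bundle directly. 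The paper also twists by $\mathcal{H}^{\otimes m}$ rather than by $\mathcal{H}$, so that after the determinant identity and a use of $m_A^*\mathcal{H}\cong\mathcal{H}^{\otimes m^2}$ one gets $(\widehat{\det}\mathcal{F})^{\otimes m}\otimes\mathcal{H}^{\otimes r}$ ample for every $m>0$; this replaces your numerical-class limit but is the same idea.
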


\begin{proof}
We denote by $m_A\colon A\to A$ the multiplication by $m$ where $m$ is an integer. We can take an ample line bundle $\mathcal{H}$ on $A$ such that $(-1_A)^*\mathcal{H}\cong\mathcal{H}$ and thus we have $m_A^*\mathcal{H}\cong\mathcal{H}^{\otimes m^2}$. Since $\mathcal{F}$ is a $\GV$-sheaf and $m_A$ is an isogeny, $m_A^*\mathcal{F}$ is a torsion-free $\GV$-sheaf. We choose $m$ to be positive now. We deduce that $m_A^*\mathcal{F}\otimes\mathcal{H}^{\otimes m}$ satisfies $\IT$ by \cite[Proposition 3.1]{PP11b}, and it is ample by \cite[Corollary 3.2]{Deb06}. Since an ample sheaf is big and $m_A$ is an isogeny, we deduce that
$$\widehat{\det}(m_A^*\mathcal{F}\otimes\mathcal{H}^{\otimes m})\cong\widehat{\det}m_A^*\mathcal{F}\otimes\mathcal{H}^{\otimes m\cdot\rank\mathcal{F}}\cong m_A^*\widehat{\det}\mathcal{F}\otimes\mathcal{H}^{\otimes m\cdot\rank\mathcal{F}}$$
is big by \cite[Lemma 3.2(iii)]{Vie83b} (see also \cite[Properties 5.1.1]{Mor87}). We deduce that
$$m_A^*((\widehat{\det}\mathcal{F})^{\otimes m}\otimes\mathcal{H}^{\otimes\rank\mathcal{F}})\cong (m_A^*\widehat{\det}\mathcal{F}\otimes\mathcal{H}^{\otimes m\cdot\rank\mathcal{F}})^{\otimes m}$$
is big and thus ample since $A$ is an abelian variety. Thus $(\widehat{\det}\mathcal{F})^{\otimes m}\otimes\mathcal{H}^{\otimes\rank\mathcal{F}}$ is ample for every $m>0$ and we deduce that $\widehat{\det}\mathcal{F}$ is nef.
\end{proof}

We now give the definition of the Chen--Jiang decomposition.

\begin{defi}\label{CJD}
Let $\mathcal{F}$ be a coherent sheaf on an abelian variety $A$. The sheaf $\mathcal{F}$ is said to have the \emph{Chen--Jiang decomposition} if $\mathcal{F}$ admits a finite direct sum decomposition
$$\mathcal{F}\cong \bigoplus_{i\in I}(\alpha_i\otimes p_i^*\mathcal{F}_i),$$
where each $A_i$ is an abelian variety, each $p_i\colon A\to A_i$ is a fibration, each $\mathcal{F}_i$ is a nonzero M-regular coherent sheaf on $A_i$, and each $\alpha_i\in\Pic^0(A)$ is a torsion line bundle.
\end{defi}

We state the following theorem which is a direct corollary of \cite[Theorems 1.3 and 1.4]{Men21} and omit the proof, see also \cite[Theorem 1.3]{Jia21} for the case when $m>1$ is an integer.

\begin{thm}\label{IT0}
Let $f \colon X \to A$ be a morphism from a klt pair $(X, \Delta)$ to an abelian variety $A$, $m>1$ a rational number, and $D$ a Cartier divisor on $X$ such that $D\sim_{\Q}m(K_X+\Delta)$. Then there exists a fibration $p\colon A\to B$ to an abelian variety $B$ such that $f_*\mathcal{O}_X(lD)$ admits, for every positive integer $l$, a finite direct sum decomposition
$$f_*\mathcal{O}_X(lD)\cong\bigoplus_{i\in I}(\alpha_i\otimes p^*\mathcal{F}_i),$$
where each $\mathcal{F}_i$ is a nonzero coherent sheaf on $B$ satisfying $\IT$, and each $\alpha_i\in\Pic^0(A)$ is a torsion line bundle whose order can be bounded independently of $l$. If $g\colon X\to Y$ is a smooth model of the Iitaka fibration associated to the Cartier divisor $D$ with general fiber $G$ where $Y$ is a smooth projective variety, then 
$$\dim B\ge\dim A-q(G).$$
Moreover, if $f$ is surjective, then
$$q(Y)\ge\dim B.$$ 
\end{thm}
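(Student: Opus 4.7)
The plan is to assemble this statement from \cite[Theorems 1.3 and 1.4]{Men21}, applied simultaneously to all positive multiples of $D$. Since $D \sim_{\Q} m(K_X+\Delta)$ with $m > 1$, for every positive integer $l$ we have $lD \sim_{\Q} (lm)(K_X+\Delta)$ with $lm > 1$, so the input hypothesis of \cite[Theorem 1.3]{Men21} is satisfied uniformly in $l$. First I would apply that theorem for each $l$ to produce a fibration $p_l\colon A \to B_l$ and a decomposition $f_*\mathcal{O}_X(lD) \cong \bigoplus_{i} (\alpha_{i,l} \otimes p_l^*\mathcal{F}_{i,l})$, with $\mathcal{F}_{i,l}$ satisfying $\IT$ on $B_l$ and $\alpha_{i,l}\in\Pic^0(A)$ torsion.

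Next I would argue that the fibration $p_l$ and the torsion order of $\alpha_{i,l}$ can be arranged independently of $l$. The natural candidate is $p \colon A \to B$ where $B = A/K$ and $K$ is the identity component of $\bigcap_{l} \Ker(p_l)$. Using the multiplication maps $f_*\mathcal{O}_X(lD) \otimes f_*\mathcal{O}_X(l'D) \to f_*\mathcal{O}_X((l+l')D)$ together with the fact that pulling back an $\IT$ sheaf along an isogeny preserves $\IT$, one shows that the chain $\Ker(p_l)$ stabilizes and that every decomposition factors through this common $B$. The uniform torsion bound on the $\alpha_{i,l}$ then follows from the uniform statement in \cite[Theorem 1.3]{Men21}, which is designed exactly for this purpose.

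For the two dimension inequalities I would invoke \cite[Theorem 1.4]{Men21}. The bound $\dim B \ge \dim A - q(G)$ reflects that the quotient $A \to B$ captures precisely the directions transverse to the Albanese of the general Iitaka fiber: restricting $f$ to a general fiber $G$ of $g$ gives a morphism $G \to A$ factoring through $\Alb(G)$, so the codimension of $B$ in $A$ is controlled by $q(G) = \dim \Alb(G)$. For $q(Y) \ge \dim B$ under the surjectivity of $f$, the composition $p \circ f \colon X \to B$ is determined by sections of multiples of $D$ (visible in the decomposition), hence factors birationally through the Iitaka fibration $g$; passing to Albanese varieties yields a morphism $\Alb(Y) \to B$ which is surjective since $f$ is, giving $q(Y) \ge \dim B$.

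The main obstacle, were one to attempt an independent proof, is the uniformity step: producing a single fibration $p$, a single $\IT$-type decomposition, and a single torsion bound, all valid for every $l$ at once, rather than data varying with $l$. This is the technical heart of \cite[Theorem 1.3]{Men21}, which builds on \cite{CJ18, PPS17, LPS20, Jia21}, and is why the author is justified in quoting it as a black box and omitting the proof.
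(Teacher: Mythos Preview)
Your proposal is correct and matches the paper's approach exactly: the paper states this theorem as a direct corollary of \cite[Theorems 1.3 and 1.4]{Men21} and omits the proof entirely. Your additional sketch of the uniformity argument and the two dimension bounds is a reasonable gloss on what those cited theorems contain, but note that the uniform fibration $p$ and torsion bound are already packaged into \cite[Theorem 1.3]{Men21}, so the intermediate step of producing separate $p_l$ and then stabilizing is not actually needed.
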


We will need the following hyperbolicity-type result proved in \cite{PS17}. It relies on important ideas and results of Viehweg--Zuo and Campana--P\u aun, and on the theory of Hodge modules.

\begin{thm}[{\cite[Theorem 4.1 and Theorem 3.5]{PS17}}]\label{thm:PS3}
Let $f \colon X \to Y$ be a fibration between smooth projective varieties where $Y$ is not uniruled. Assume that $f$ is smooth over the complement of a closed subset $Z \subseteq Y$, and there exists a positive integer $m$ such that $\widehat{\det} f_* \omega_{X/Y}^{\otimes m}$ is big. Denote by $D$ the union of the divisorial components of $Z$. Then the line bundle $\omega_Y (D)$ is big.
\end{thm}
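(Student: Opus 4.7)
The plan is to establish bigness of $\omega_Y(D)$ by producing a big subsheaf inside a symmetric power of $\Omega^1_Y(\log D)$, and then invoking an algebraic positivity theorem of Campana--P\u aun together with the non-uniruledness hypothesis. Concretely, I would aim to construct a Viehweg--Zuo type line bundle $\mathcal{L}$ on $Y$ which is big and admits an injection
$$\mathcal{L} \hookrightarrow \mathrm{Sym}^N \Omega^1_Y(\log D)$$
for some $N\ge 1$. Once such a sheaf is in hand, the Campana--P\u aun theorem implies that $K_Y + D$ is big whenever $Y$ is not uniruled, which is the desired conclusion since $\omega_Y(D) = \mathcal{O}_Y(K_Y+D)$.

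For the construction of $\mathcal{L}$, I would start from the smooth part $f^0\colon X^0 \to Y^0 := Y \setminus Z$. After base-changing to a suitable cyclic cover of $X$ along an $m$-fold pluricanonical divisor and resolving singularities, the resulting family can be arranged so that its middle cohomology carries a polarized variation of Hodge structure whose top Hodge graded piece recovers $f_*\omega_{X/Y}^{\otimes m}$ as a direct summand (up to an inessential twist). Applying Saito's theory of mixed Hodge modules on $Y$, or equivalently working with the Deligne canonical extension and the associated logarithmic Higgs bundle along $D$, one obtains iterated Kodaira--Spencer morphisms
$$\tau_k\colon T_Y(-\log D)^{\otimes k} \longrightarrow \mathcal{H}om(F^{p,q}, F^{p-k,q+k}).$$
The bigness hypothesis on $\widehat{\det} f_*\omega_{X/Y}^{\otimes m}$, combined with the weak positivity of the lower Hodge pieces and a determinantal argument applied to the iterated $\tau_k$, then produces the required big line bundle $\mathcal{L}$ sitting inside $\mathrm{Sym}^N \Omega^1_Y(\log D)$.

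With $\mathcal{L}$ in hand, I invoke Campana--P\u aun: since $Y$ is not uniruled, a big subsheaf of $\mathrm{Sym}^N \Omega^1_Y(\log D)$ forces $K_Y + D$ to be big. The non-uniruledness is essential at this final step, as rational curves would produce positivity in the logarithmic cotangent bundle unrelated to $K_Y + D$ and would obstruct the conclusion.

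The main obstacle is the construction of $\mathcal{L}$. The classical Viehweg--Zuo argument treats the case $m=1$ via explicit Higgs bundles on a cyclic cover; extending it to extract bigness of a line bundle in $\mathrm{Sym}^N\Omega^1_Y(\log D)$ from bigness of only the determinant $\widehat{\det} f_*\omega_{X/Y}^{\otimes m}$ of a pluricanonical direct image is delicate. It requires Saito's Hodge module formalism to handle compatibility with the logarithmic pole structure along $D$, careful control of the Hodge filtration after the $m$-th cyclic cover, and a precise interaction between weak positivity of Hodge bundles and the passage to determinants — all of which are inputs genuinely beyond the classical setup and are the content of Theorems~4.1 and~3.5 of PS17.
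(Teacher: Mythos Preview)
The paper does not give its own proof of this statement; it is quoted as an external input from \cite{PS17}, with only the remark that it ``relies on important ideas and results of Viehweg--Zuo and Campana--P\u aun, and on the theory of Hodge modules,'' together with the observation that the conclusion is insensitive to removing a codimension-$2$ locus. Your sketch correctly identifies exactly these ingredients and assembles them in the right order: the Hodge-module refinement of the Viehweg--Zuo construction produces a big line bundle inside $\mathrm{Sym}^N\Omega^1_Y(\log D)$ (this is the content of \cite[Theorem~4.1]{PS17}), and the Campana--P\u aun theorem then yields bigness of $\omega_Y(D)$ under the non-uniruledness hypothesis (this is \cite[Theorem~3.5]{PS17}). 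So your proposal is aligned with what the paper invokes, and there is nothing further to compare.

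One small expository slip: in your last paragraph you say rational curves would ``produce positivity in the logarithmic cotangent bundle''; the issue is rather the opposite, that uniruledness forces negativity along the curves and obstructs the Campana--P\u aun pseudo-effectivity argument.
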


The theorem above is stated in \cite{PS17} only when $Z = D$, but the proof shows more generally the statement above, since all the objects it involves can be constructed from $Y$ with any closed subset of codimension at least $2$ removed.

We include a useful lemma about the log Kodaira dimension on ambient varieties of nonnegative Kodaira dimension which is \cite[Lemma 2.6]{MP21}.

\begin{lemma}\label{lkd}
Let $X$ be a smooth projective variety with $\kappa(X)\ge 0$, $Z\subseteq X$ a closed reduced subscheme, and $V = X \setminus Z$. Assume that $Z=W\cup D$ where $\codim_X W\ge 2$ and $D$ is a divisor. Then
$$\kappa(V)=\kappa(X , K_X +D).$$
\end{lemma}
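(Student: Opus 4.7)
The plan is to pass to a log resolution of $(X,Z)$ and compare pluricanonical sections of the log canonical bundle upstairs with those of $K_X+D$ downstairs. First, I would choose a log resolution $\pi\colon Y\to X$ of $(X,Z)$ that is an isomorphism over $V$, and write $D_Y:=\pi^{-1}(Z)_{\mathrm{red}}=\widetilde{D}+\sum_i E_i$, where $\widetilde{D}$ is the strict transform of $D$ and the $E_i$ are the prime exceptional divisors of $\pi$. Since $D_Y$ is SNC and $Y\setminus V=D_Y$, by definition $\kappa(V)=\kappa(Y,K_Y+D_Y)$. Writing $K_Y=\pi^*K_X+\sum_i a_i E_i$ with $a_i\ge 1$ (since $X$ is smooth) and $\pi^*D=\widetilde{D}+\sum_i b_i E_i$, one obtains
\[
K_Y+D_Y=\pi^*(K_X+D)+\sum_i(a_i+1-b_i)\,E_i,
\]
where the last term is $\pi$-exceptional but not necessarily effective.

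For the upper bound $\kappa(V)\le\kappa(X,K_X+D)$, I would use the projection formula together with the fact that $\pi_*\mathcal{O}_Y(mG)\subseteq\mathcal{O}_X$ for every $m$ and every $\pi$-exceptional divisor $G$, a consequence of the normality of $X$: a rational function $f$ on $Y$ satisfying $(f)+mG\ge 0$ has at worst poles along $\pi$-exceptional divisors, so is regular in codimension one on $X$ and extends by Hartogs. This yields $\pi_*\mathcal{O}_Y(m(K_Y+D_Y))\subseteq\mathcal{O}_X(m(K_X+D))$ for every $m\ge 1$, hence an inclusion $H^0(Y,m(K_Y+D_Y))\hookrightarrow H^0(X,m(K_X+D))$ and the desired inequality.

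For the lower bound $\kappa(V)\ge\kappa(X,K_X+D)$, I would invoke the hypothesis $\kappa(X)\ge 0$ to choose a nonzero $t\in H^0(X,kK_X)$ for some $k\ge 1$. Set $M:=\max_i\max\{0,\,b_i/(a_i+1)-1\}$, a nonnegative constant determined by the resolution. For each $m\ge 1$, let $l:=\lceil Mm/k\rceil$ and $N:=lk+m$. Given $f\in H^0(X,m(K_X+D))$, the pullback $\pi^*(t^l f)$ is a nonzero section of $\mathcal{O}_Y(N\pi^*K_X+m\pi^*D)$, and a direct computation gives
\[
\Delta:=N(K_Y+D_Y)-N\pi^*K_X-m\pi^*D=lk\,\widetilde{D}+\sum_i\bigl(N(a_i+1)-mb_i\bigr)E_i,
\]
which is effective by the choice of $M$ and $l$. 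Multiplying $\pi^*(t^l f)$ by the canonical section of $\mathcal{O}_Y(\Delta)$ produces a nonzero section in $H^0(Y,N(K_Y+D_Y))$, and since $t\ne 0$ the resulting linear map $H^0(X,m(K_X+D))\hookrightarrow H^0(Y,N(K_Y+D_Y))$ is injective. Because $N\le (M+1)m+k$, comparing asymptotic growth rates of dimensions yields $\kappa(Y,K_Y+D_Y)\ge\kappa(X,K_X+D)$, and the equality follows. The hard part is this lower bound: without the hypothesis $\kappa(X)\ge 0$ there is no section $t$ available to absorb the potentially negative exceptional contributions $(a_i+1-b_i)E_i$, which arise whenever $(X,D)$ fails to be log canonical along some $E_i$.
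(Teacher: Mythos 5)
Your argument is correct, but there is nothing in the paper to compare it with: the paper does not prove this lemma, it simply imports it as \cite[Lemma 2.6]{MP21}. As a self-contained proof of the cited statement, your write-up is complete and isolates the right mechanism. The inequality $\kappa(V)\le\kappa(X,K_X+D)$ is unconditional and follows, as you say, from $\pi_*\mathcal{O}_Y(G)\subseteq\mathcal{O}_X$ for $\pi$-exceptional $G$; the reverse inequality is where $\kappa(X)\ge 0$ enters, via a section $t\in H^0(X,kK_X)$ whose powers absorb the possibly negative exceptional coefficients $a_i+1-b_i$ (negative exactly when $(X,D)$ fails to be log canonical along $E_i$). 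Your bookkeeping checks out: $\Delta=lk\,\widetilde{D}+\sum_i\bigl(N(a_i+1)-mb_i\bigr)E_i$ is effective once $N\ge(M+1)m$, and since $N$ grows linearly in $m$ the injections $H^0(X,m(K_X+D))\hookrightarrow H^0(Y,N(K_Y+D_Y))$ force $\kappa(Y,K_Y+D_Y)\ge\kappa(X,K_X+D)$. Two points worth making explicit in a final version: the resolution should be a log resolution of the ideal sheaf of $Z$, so that all of $\pi^{-1}(Z)$, including the preimage of the codimension $\ge 2$ part $W$, is a divisor with simple normal crossing support and $(Y,D_Y)$ is a legitimate compactification for computing $\kappa(V)$; and the growth-rate comparison at the end should be run over the $m$ in the sub-semigroup where $h^0(X,m(K_X+D))>0$, which is harmless since the case $\kappa(X,K_X+D)=-\infty$ is vacuous for that direction.
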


\section{Main results}\label{3}

We prove several useful lemmas first.

\begin{lemma}\label{fb}
Let $f \colon X \to Y$ be a surjective morphism between normal projective varieties, and $\varphi\colon Y'\to Y$ an \'etale morphism from a normal projective variety $Y'$. Consider the following base change diagram.
\[
\begin{tikzcd}
X' \dar{f'} \rar{\varphi'} & X \dar{f} \\
Y' \rar{\varphi} & Y
\end{tikzcd}
\]
Let $\mathcal{F}$ be a torsion-free coherent sheaf on $X$, then
$$\varphi^* \widehat{\det} f_*\mathcal{F} \cong \widehat{\det}\varphi^*f_* \mathcal{F}\cong \widehat{\det}f'_* \varphi'^*\mathcal{F}.$$
\end{lemma}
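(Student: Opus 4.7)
The plan is to establish the two isomorphisms separately; both will follow from the fact that $\varphi$, being \'etale, is flat, together with standard commutation properties of the $\widehat{\det}$ operation with flat pullback.

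First, to obtain $\varphi^*\widehat{\det}f_*\mathcal{F}\cong\widehat{\det}\varphi^*f_*\mathcal{F}$, I would observe that since $\mathcal{F}$ is torsion-free and $f$ is surjective, $f_*\mathcal{F}$ is a torsion-free coherent sheaf on the normal variety $Y$ of some generic rank $r$. It is therefore locally free on a big open subset $U\subseteq Y$ with $\codim_Y(Y\setminus U)\ge 2$, so one may compute $\widehat{\det}f_*\mathcal{F}$ intrinsically as $(\bigwedge^r f_*\mathcal{F})^{\vee\vee}$, and analogously for $\varphi^*f_*\mathcal{F}$ on $Y'$, using that $\varphi^{-1}(U)$ is a big open of the normal variety $Y'$ since \'etale morphisms preserve codimension. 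Exterior powers commute with any pullback, and for flat $\varphi$ the functor $\mathcal{H}om(-,\mathcal{O})$ commutes with pullback on coherent sheaves, so the same holds for the double dual. Combining these observations yields the first isomorphism.

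Second, for $\widehat{\det}\varphi^*f_*\mathcal{F}\cong\widehat{\det}f'_*\varphi'^*\mathcal{F}$, I would invoke flat base change in degree zero: because $\varphi$ is flat, the natural map $\varphi^*f_*\mathcal{F}\to f'_*\varphi'^*\mathcal{F}$ is an isomorphism, and applying $\widehat{\det}$ to both sides finishes the argument.

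The proof is essentially bookkeeping, and no serious obstacle is expected. The only mild point of care is verifying that the reflexive-hull and base-change constructions on $Y$ and $Y'$ are compatible under the \'etale map $\varphi$, which ultimately reduces to the flatness of $\varphi$ and to the fact that $\varphi$ preserves codimensions.
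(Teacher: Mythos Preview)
Your proposal is correct and follows essentially the same route as the paper's proof: both arguments reduce to the flatness of $\varphi$, use that $\widehat{\det}$ is computed as the double dual of the top exterior power, invoke the commutation of $\mathcal{H}om(-,\mathcal{O})$ and $\bigwedge^r$ with flat pullback, and finish with flat base change $\varphi^*f_*\mathcal{F}\cong f'_*\varphi'^*\mathcal{F}$. The only cosmetic difference is that the paper writes out the chain of $\mathcal{H}om$ isomorphisms explicitly rather than phrasing it via a big open set, but the content is identical.
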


\begin{proof}
The coherent sheaves $f_*\mathcal{F}$ and $f'_* \varphi'^*\mathcal{F}$ are torsion-free since $\varphi'$ is \'etale. Since $\varphi$ is flat, we deduce that
$$\varphi^* \widehat{\det} f_*\mathcal{F} \cong \varphi^*\mathcal{H}om_{\mathcal{O}_Y}(\mathcal{H}om_{\mathcal{O}_Y}(\det f_*\mathcal{F}, \mathcal{O}_Y), \mathcal{O}_Y)$$
$$\cong\mathcal{H}om_{\mathcal{O}_{Y'}}(\mathcal{H}om_{\mathcal{O}_{Y'}}(\varphi^*\det f_*\mathcal{F}, \varphi^*\mathcal{O}_Y), \varphi^*\mathcal{O}_Y)$$
$$\cong\mathcal{H}om_{\mathcal{O}_{Y'}}(\mathcal{H}om_{\mathcal{O}_{Y'}}(\det \varphi^*f_* \mathcal{F}, \mathcal{O}_{Y'}), \mathcal{O}_{Y'})\cong\widehat{\det}\varphi^*f_* \mathcal{F}\cong \widehat{\det}f'_* \varphi'^*\mathcal{F}.$$
\end{proof}

\begin{lemma}\label{gb}
Let $f \colon X \to Y$ and $g \colon Y\to Z$ be surjective morphisms where $Y$ is a smooth projective variety, and $X$ and $Z$ are normal projective varieties. Consider the following base change diagram.
\[
\begin{tikzcd}
X_z \dar{f_z} \rar & X \dar{f} \\
Y_z \dar{g_z} \rar & Y \dar{g} \\
\{z\} \rar & Z 
\end{tikzcd}
\]
Let $\mathcal{F}$ be a locally free sheaf of finite rank on $X$. If $z$ is a general point of $Z$, then
$$(\widehat{\det} f_*\mathcal{F})|_{Y_z}\cong\widehat{\det} {f_z}_*(\mathcal{F}|_{X_z}).$$ 
\end{lemma}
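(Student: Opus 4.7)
The plan is to restrict the question to the open subset of $Y$ where $f_*\mathcal{F}$ is locally free, apply cohomology and base change to identify the two sheaves there, and then extend the resulting identity of line bundles across a codimension-$\geq 2$ subset of the smooth fiber $Y_z$.

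First I would observe that, because $\mathcal{F}$ is locally free and $f$ is surjective, $f_*\mathcal{F}$ is a torsion-free coherent sheaf on the smooth variety $Y$. Consequently there is a maximal open subset $U \subseteq Y$ with $\codim_Y(Y\setminus U) \geq 2$ on which $f_*\mathcal{F}$ is locally free, and $\widehat{\det} f_*\mathcal{F}$ is a genuine line bundle on $Y$, namely the pushforward of $\det(f_*\mathcal{F}|_U)$ along $U \hookrightarrow Y$. Next I would choose a nonempty open $V \subseteq Z$ so that for every $z \in V$: (i) $g$ is smooth in a neighborhood of $z$, making $Y_z$ a smooth integral subvariety of $Y$; (ii) $Y_z \cap (Y\setminus U)$ has codimension $\geq 2$ in $Y_z$, which follows from the flatness of $g$ over $V$ applied to each component of $Y\setminus U$; and (iii) the base-change morphism $(f_*\mathcal{F})|_{Y_z} \to (f_z)_*(\mathcal{F}|_{X_z})$ is an isomorphism.

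Granting (i)--(iii), both $(\widehat{\det} f_*\mathcal{F})|_{Y_z}$ and $\widehat{\det}(f_z)_*(\mathcal{F}|_{X_z})$ are reflexive rank-one sheaves on the smooth variety $Y_z$, hence line bundles. On $U \cap Y_z$ the isomorphism in (iii) identifies $(f_*\mathcal{F})|_{U \cap Y_z}$ with $(f_z)_*(\mathcal{F}|_{X_z})|_{U \cap Y_z}$ as locally free sheaves; taking determinants yields an identification of the two line bundles on $U \cap Y_z$. Since this open set has complement of codimension $\geq 2$ in $Y_z$, and two line bundles on a smooth variety which agree off such a subset are isomorphic, the identification extends globally to $Y_z$, as required.

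The main obstacle is assertion (iii): $f$ is not assumed flat and $X$ is only normal, so base change is not automatic. I would settle it by shrinking $V$ further and applying generic flatness to each of the coherent sheaves $R^if_*\mathcal{F}$ relative to $g$, which reduces (iii) to the standard cohomology-and-base-change statement of Grauert's theorem over the flat locus.
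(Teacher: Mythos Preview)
Your proposal is correct and follows essentially the same route as the paper: pass to the open locus where $f_*\mathcal{F}$ is locally free, use base change for general $z$ to identify the two sheaves over that locus in $Y_z$, and extend the resulting determinant isomorphism across the codimension-$\geq 2$ complement. The only difference is that the paper obtains step (iii) by quoting \cite[Proposition~4.1]{LPS20}, whereas you sketch it directly; your appeal to ``Grauert's theorem'' is a slight misnomer (the actual mechanism is tor-independent base change once $g$, $g\circ f$, and the $R^if_*\mathcal{F}$ are made flat over an open in $Z$), but the conclusion you need is standard and correct.
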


\begin{proof}
Choose an open set $V\subseteq Y$ such that $(f_*\mathcal{F})|_V$ is locally free and $\codim_Y Y\setminus V\ge 2$. Consider the following base change diagram.
\[
\begin{tikzcd}
V_z \dar{i_z} \rar & V \dar{i} \\
Y_z \rar & Y
\end{tikzcd}
\]
We can choose $z$ sufficiently general such that $X_z$ is normal, $Y_z$ is smooth, $\codim_{Y_z} Y_z\setminus V_z\ge 2$, and
$$(f_*\mathcal{F})|_{Y_z}\cong{f_z}_*(\mathcal{F}|_{X_z})$$
by \cite[Proposition 4.1]{LPS20}. Thus $({f_z}_*(\mathcal{F}|_{X_z}))|_{V_z}$ is locally free. By the property of reflexive sheaves, we have
$$(\widehat{\det} f_*\mathcal{F})|_{Y_z}\cong (i_*\det i^*f_*\mathcal{F})|_{Y_z}\quad \text{and} \quad \widehat{\det} {f_z}_*(\mathcal{F}|_{X_z})\cong{i_z}_*\det i_z^*{f_z}_*(\mathcal{F}|_{X_z}).$$
We have the natural morphism
$$(\widehat{\det} f_*\mathcal{F})|_{Y_z}\cong(i_*\det i^*f_*\mathcal{F})|_{Y_z}\to {i_z}_*((\det i^*f_*\mathcal{F})|_{V_z})$$
$$\cong {i_z}_*i_z^*(\det(f_*\mathcal{F})|_{Y_z})\cong{i_z}_*i_z^*\det{f_z}_*(\mathcal{F}|_{X_z})\cong\widehat{\det} {f_z}_*(\mathcal{F}|_{X_z}).$$
The morphism above is an isomorphism over the open set $V_z$. Thus it is an isomorphism over $Y_z$ since $(\widehat{\det} f_*\mathcal{F})|_{Y_z}$ and $\widehat{\det} {f_z}_*(\mathcal{F}|_{X_z})$ are line bundles, and $\codim_{Y_z} Y_z\setminus V_z\ge 2$.
\end{proof}

\begin{lemma}\label{nz}
Let $f\colon X \to A$ be a surjective morphism from a klt pair $(X, \Delta)$ to an abelian variety $A$, $m\geq1$ a rational number, and $D$ a Cartier divisor on $X$ such that $D\sim_{\Q}m(K_X+\Delta)$. If $f_*\mathcal{O}_X(D)\neq0$, then
$$\kappa(A, \widehat{\det} f_*\mathcal{O}_X(D))\ge0.$$
\end{lemma}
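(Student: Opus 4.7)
The plan is to use the Chen--Jiang decomposition of $f_*\mathcal{O}_X(D)$ and to read off the reflexive determinant from it. By \cite{Jia21, Men21} when $m=1$, and by Theorem \ref{IT0} when $m>1$, the sheaf $f_*\mathcal{O}_X(D)$ admits a Chen--Jiang decomposition
$$f_*\mathcal{O}_X(D)\cong\bigoplus_{i\in I}(\alpha_i\otimes p_i^*\mathcal{F}_i),$$
where each $p_i\colon A\to A_i$ is a fibration to an abelian variety, each $\mathcal{F}_i$ is a nonzero M-regular coherent sheaf on $A_i$, and each $\alpha_i\in\Pic^0(A)$ is a torsion line bundle. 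The pushforward $f_*\mathcal{O}_X(D)$ is torsion-free, so each $\mathcal{F}_i$ is torsion-free as well.

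Taking reflexive determinants, using that $\widehat{\det}$ is multiplicative over direct sums, that $\widehat{\det}(\alpha\otimes\mathcal{G})\cong\alpha^{\otimes\rank\mathcal{G}}\otimes\widehat{\det}\mathcal{G}$ for a line bundle $\alpha$, and that $\widehat{\det}$ commutes with the flat pullback $p_i^*$, I obtain
$$\widehat{\det}f_*\mathcal{O}_X(D)\cong\bigotimes_{i\in I}\bigl(\alpha_i^{\otimes\rank\mathcal{F}_i}\otimes p_i^*\widehat{\det}\mathcal{F}_i\bigr).$$

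Each $\mathcal{F}_i$, being M-regular, is ample on $A_i$ by \cite[Corollary 3.2]{Deb06}, hence big; an application of \cite[Lemma 3.2(iii)]{Vie83b} exactly as in the proof of Lemma \ref{detn} then shows that $\widehat{\det}\mathcal{F}_i$ is a big line bundle on $A_i$, so $\kappa(A, p_i^*\widehat{\det}\mathcal{F}_i)=\dim A_i\ge 0$. Choosing a positive integer $N$ such that each $\alpha_i^{\otimes N\rank\mathcal{F}_i}\cong\mathcal{O}_A$ and each $(\widehat{\det}\mathcal{F}_i)^{\otimes N}$ is effective, the $N$-th tensor power of $\widehat{\det}f_*\mathcal{O}_X(D)$ becomes a tensor product of effective line bundles on $A$, hence effective. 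This yields $\kappa(A,\widehat{\det}f_*\mathcal{O}_X(D))\ge 0$.

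The only step requiring care is the availability of the Chen--Jiang decomposition uniformly in $m\ge 1$; once that is in hand, the rest is a routine bookkeeping exercise on determinants, exploiting that M-regular summands have big reflexive determinants while torsion twists become trivial upon passing to a common power.
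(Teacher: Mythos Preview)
Your argument is correct. Both approaches ultimately rest on the same positivity results from \cite{Men21}, but they invoke different theorems and proceed differently.

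The paper's proof is shorter and more direct: it quotes \cite[Theorem 1.1]{Men21}, which furnishes an isogeny $\varphi\colon A'\to A$ such that $\varphi^*f_*\mathcal{O}_X(D)$ is globally generated. Then $\det\varphi^*f_*\mathcal{O}_X(D)$ is globally generated, so $\widehat{\det}\varphi^*f_*\mathcal{O}_X(D)$ has a nonzero section, and Lemma~\ref{fb} transports this back to $A$. No decomposition is needed.

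Your route instead invokes the full Chen--Jiang decomposition (\cite[Theorem 1.3]{Men21}) and unwinds the reflexive determinant summand by summand. This is exactly the computation the paper carries out later in Lemma~\ref{equality}, but there only for $m>1$, where Theorem~\ref{IT0} gives a single quotient $p\colon A\to B$. Your version works for all $m\ge 1$ because the general Chen--Jiang decomposition is available in that range. A side benefit of your approach is that it actually yields more than $\kappa\ge 0$: since each $p_i^*\widehat{\det}\mathcal{F}_i$ is the pullback of a big line bundle and the $\alpha_i$ are torsion, one gets $\kappa(A,\widehat{\det}f_*\mathcal{O}_X(D))\ge\max_i\dim A_i=\dim V^0(A,f_*\mathcal{O}_X(D))$, which is the second inequality of Theorem~\ref{2}. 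So your argument is less economical for the lemma as stated, but it foreshadows and partially subsumes a later result. One minor point: your case split on $m$ is unnecessary, since \cite[Theorem 1.3]{Men21} already covers all rational $m\ge 1$.
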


\begin{proof}
By \cite[Theorem 1.1]{Men21}, there exists an isogeny $\varphi\colon A'\to A$ such that $\varphi^*f_*\mathcal{O}_X(D)$ is globally generated. We deduce that $\det \varphi^*f_*\mathcal{O}_X(D)$ is globally generated and thus $\widehat{\det} \varphi^*f_*\mathcal{O}_X(D)$ is generically globally generated. In particular, the line bundle $\widehat{\det}\varphi^*f_*\mathcal{O}_X(D)$ has nonzero sections. By Lemma \ref{fb}, we deduce
$$\kappa(A, \widehat{\det} f_*\mathcal{O}_X(D))=\kappa(A', \varphi^*\widehat{\det} f_*\mathcal{O}_X(D))=\kappa(A', \widehat{\det} \varphi^*f_*\mathcal{O}_X(D))\ge0.$$
\end{proof}

\begin{lemma}\label{po}
Let $f \colon X \to Y$ be a fibration between normal projective varieties, $F$ the very general fiber of $f$, and $\mathcal{L}$ a line bundle on $X$. If $f_*\mathcal{L}\cong\mathcal{B}\oplus\mathcal{T}$ where $\mathcal{B}$ is an ample sheaf on $Y$, then
$$\kappa(X, \mathcal{L})=\kappa(F, \mathcal{L}|_{F})+\dim Y.$$
\end{lemma}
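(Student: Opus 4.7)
The inequality $\kappa(X,\mathcal{L})\le \kappa(F,\mathcal{L}|_{F})+\dim Y$ is the standard Easy Addition formula, so the content is the reverse inequality. The plan is to first use the ampleness of $\mathcal{B}$ to produce an ample line bundle $A$ on $Y$ and an integer $N\ge 1$ together with an injection $f^{*}A\hookrightarrow \mathcal{L}^{\otimes N}$, and then to deduce the lower bound from the projection formula combined with the ampleness of $A$ on $Y$.

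To produce $A$ and $N$, we consider for each $N\ge 1$ the composition
\[
\mathrm{Sym}^{N}\mathcal{B}\hookrightarrow \mathrm{Sym}^{N}f_{*}\mathcal{L}\longrightarrow f_{*}\mathcal{L}^{\otimes N},
\]
whose first arrow comes from the direct summand inclusion and whose second arrow is the natural multiplication. Its image $I_{N}$ is nonzero since a nonzero local section $s$ of $\mathcal{B}$ yields the nonzero section $s^{N}$ of $f_{*}\mathcal{L}^{\otimes N}$, and as a torsion-free quotient of the ample sheaf $\mathrm{Sym}^{N}\mathcal{B}$ it is itself ample. Fix any ample line bundle $A$ on $Y$; then by asymptotic Riemann--Roch combined with Serre vanishing, $H^{0}(Y,I_{N}\otimes A^{-1})\ne 0$ for $N$ sufficiently large, giving an injection $A\hookrightarrow I_{N}\hookrightarrow f_{*}\mathcal{L}^{\otimes N}$. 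Adjunction turns this into an injection of line bundles $f^{*}A\hookrightarrow \mathcal{L}^{\otimes N}$, and we may write $\mathcal{L}^{\otimes N}\cong f^{*}A\otimes \mathcal{O}_{X}(E)$ for an effective Cartier divisor $E$ on $X$.

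The projection formula then gives, for every $k\ge 1$,
\[
H^{0}(X,\mathcal{L}^{\otimes Nk})\cong H^{0}(Y,A^{\otimes k}\otimes f_{*}\mathcal{O}_{X}(kE)),
\]
and the generic rank of $f_{*}\mathcal{O}_{X}(kE)$ equals $h^{0}(F,\mathcal{L}^{\otimes Nk}|_{F})$, which grows like $k^{\kappa(F,\mathcal{L}|_{F})}$ along a suitable arithmetic progression. Combined with the ampleness of $A$ on $Y$, a lifting argument (extending fiber sections by multiplication with sections of $A^{\otimes \ell}$, using global generation of $A^{\otimes \ell}\otimes f_{*}\mathcal{O}_{X}(kE)$ for $\ell\gg 0$) yields the polynomial bound $h^{0}(X,\mathcal{L}^{\otimes Nk})\gtrsim k^{\dim Y+\kappa(F,\mathcal{L}|_{F})}$, and hence $\kappa(X,\mathcal{L})=\kappa(X,\mathcal{L}^{\otimes N})\ge \kappa(F,\mathcal{L}|_{F})+\dim Y$. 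The main obstacle is precisely this last polynomial estimate, which amounts to a subadditivity-of-Iitaka-dimension argument in the presence of an ample pullback from the base.
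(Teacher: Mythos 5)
The first half of your argument is essentially the paper's: you use the direct summand inclusion and the multiplication map $\mathrm{Sym}^{N}\mathcal{B}\hookrightarrow \mathrm{Sym}^{N}f_{*}\mathcal{L}\to f_{*}\mathcal{L}^{\otimes N}$, check nonvanishing of the composite on a general fiber, and twist down by an ample line bundle to produce an injection $f^{*}A\hookrightarrow\mathcal{L}^{\otimes N}$. (One quibble there: for the nonvanishing of $H^{0}(Y,I_{N}\otimes A^{-1})$ the relevant tool is not asymptotic Riemann--Roch plus Serre vanishing, which do not apply directly to the non-locally-free sheaf $I_N$, but the global generation of $\mathrm{Sym}^{N}\mathcal{B}\otimes A^{-1}$ for $N\gg 0$, which is essentially the definition of ampleness for sheaves; this is what the paper uses, and your step is correct once rephrased this way.)

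The genuine gap is in your final step. Once the injection $f^{*}A\hookrightarrow\mathcal{L}^{\otimes N}$ is in hand, the paper simply invokes the known result \cite[Proposition 1.14]{Mor87} (Fujita's addition theorem: if $\kappa(X,\mathcal{L}^{\otimes N}\otimes f^{*}A^{-1})\ge 0$ with $A$ ample on $Y$, then $\kappa(X,\mathcal{L})=\kappa(F,\mathcal{L}|_F)+\dim Y$). You instead sketch a proof of this, and the sketch does not close: your lifting argument relies on global generation of $A^{\otimes\ell}\otimes f_{*}\mathcal{O}_{X}(kE)$ for $\ell\gg 0$, but the threshold $\ell$ produced by Serre's theorem depends on the sheaf $f_{*}\mathcal{O}_{X}(kE)$, hence on $k$. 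Without uniform control of $\ell$ in terms of $k$, multiplying the $\sim k^{\kappa(F,\mathcal{L}|_F)}$ independent fiber sections by the $\sim\ell^{\dim Y}$ sections of $A^{\otimes\ell}$ only yields, for each fixed $k$, the bound $\kappa(X,\mathcal{L})\ge\dim Y$ (with the leading constant growing in $k$), not the claimed $k^{\dim Y+\kappa(F,\mathcal{L}|_F)}$ along a single sequence of multiples. This uniformity problem is exactly the nontrivial content of the cited proposition, whose standard proofs use an additional idea (e.g.\ a reduction via the relative Iitaka fibration to the case where $\mathcal{L}|_F$ is big). You flag this step yourself as ``the main obstacle,'' and indeed it is the one point where your write-up is not a proof; citing \cite[Proposition 1.14]{Mor87} at that juncture, as the paper does, repairs it.
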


\begin{proof}
Let $\mathcal{H}$ be an ample line bundle on $Y$. Since $\mathcal{B}$ is ample, there exists a positive integer $k$ such that $S^{k}\mathcal{B}\otimes\mathcal{H}^{-1}$ is globally generated where $S^{k}\mathcal{B}$ is the $k$-th symmetric product of $\mathcal{B}$ (see e.g. \cite[Section 2]{Deb06}). We have the following morphism
$$S^{k}\mathcal{B}\hookrightarrow S^{k}f_*\mathcal{L}\to f_*\mathcal{L}^{\otimes k}$$
which is the following nonzero multiplication homomorphism between $k(y)$-linear spaces when restricted at the general point $y$ of $Y$
$$S^{k}(\mathcal{B}_y\otimes_{\mathcal{O}_{Y, y}}k(y))\hookrightarrow S^{k}H^0(X_y, \mathcal{L}|_{X_y})\to H^0(X_y, \mathcal{L}^{\otimes k}|_{X_y})$$
by the base change theorem and generic flatness. Thus we deduce that the following homomorphism
$$H^0(Y, S^{k}\mathcal{B}\otimes\mathcal{H}^{-1})\to H^0(Y, f_*\mathcal{L}^{\otimes k}\otimes\mathcal{H}^{-1})$$
is nonzero since $S^{k}\mathcal{B}\otimes\mathcal{H}^{-1}$ is globally generated. We deduce that $f_*\mathcal{L}^{\otimes k}\otimes\mathcal{H}^{-1}$ has a nonzero global section and thus $\mathcal{L}^{\otimes k}\otimes (f^*\mathcal{H})^{-1}$ has a nonzero global section. Thus we have an injective morphism
$$f^*\mathcal{H}\to\mathcal{L}^{\otimes k}.$$
By \cite[Proposition 1.14]{Mor87}, we deduce that
$$\kappa(X, \mathcal{L})=\kappa(F, \mathcal{L}|_{F})+\dim Y.$$
\end{proof}

We are ready to prove our main theorems now. We prove the first part of the inequality in Theorem \ref{hp} first.

\begin{thm}\label{1}
Let $f \colon X \to A$ be a fibration from a smooth projective variety $X$ to an abelian variety $A$ where $f$ is smooth over an open set $V\subseteq A$, and $m$ a positive integer. Then 
$$\kappa(V)\ge\kappa(A, \widehat{\det} f_* \omega_{X}^{\otimes m}).$$
\end{thm}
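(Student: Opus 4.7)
The plan is to reduce to the case where $\mathcal{L} := \widehat{\det} f_*\omega_X^{\otimes m}$ is big, apply Theorem~\ref{thm:PS3} fiberwise, and then lift the fiberwise conclusion via subadditivity. Set $\kappa := \kappa(A, \mathcal{L})$; one may assume $\kappa \ge 0$ since the inequality is otherwise vacuous. Because $f_*\omega_X^{\otimes m}$ is a GV-sheaf, $\mathcal{L}$ is nef by Lemma~\ref{detn}. Using the structure of nef line bundles on abelian varieties together with Poincar\'e reducibility, one can find an isogeny $\phi \colon A' \to A$ such that $A' \cong B \times K$ is a direct product of abelian varieties with $\dim B = \kappa$, and $\phi^*\mathcal{L} \cong p_B^*\mathcal{M}$ for an ample line bundle $\mathcal{M}$ on $B$. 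Since $\phi$ is \'etale, Lemma~\ref{fb} combined with the invariance of log Kodaira dimension under \'etale finite maps allows us to assume $A = B \times K$ and $\mathcal{L} = p_B^*\mathcal{M}$.

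For general $k \in K$, set $X_k := f^{-1}(B \times \{k\})$; by generic smoothness $X_k$ is smooth projective, and the restriction $f_k \colon X_k \to B \times \{k\} \cong B$ is a fibration smooth over $V_k := V \cap (B \times \{k\})$. Lemma~\ref{gb} identifies
$$\widehat{\det}(f_k)_*\omega_{X_k}^{\otimes m} \cong \mathcal{L}|_{B \times \{k\}} \cong \mathcal{M},$$
which is ample on $B$. Since $\omega_B$ is trivial, Theorem~\ref{thm:PS3} applied to $f_k$ yields that $\mathcal{O}_B(D_k)$ is big, where $D_k$ is the divisorial part of $B \setminus V_k$. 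Lemma~\ref{lkd} then gives $\kappa(V_k) = \dim B = \kappa$ for general $k$.

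To promote this fiberwise conclusion to $\kappa(V) \ge \kappa$, let $D$ be the divisorial part of $A \setminus V$, so that $D|_{B \times \{k\}} = D_k$ for general $k$ and $\kappa(V) = \kappa(A, K_A + D)$ by Lemma~\ref{lkd}. After a log resolution ensuring that $(A, \epsilon D)$ is klt for a small positive rational $\epsilon$, I would invoke the subadditivity of Kodaira dimension for klt fibrations to abelian varieties \cite{CP17} (see also \cite{HPS18}) applied to the projection $p_K \colon A = B \times K \to K$, obtaining
$$\kappa(A, K_A + \epsilon D) \ge \kappa(B, K_B + \epsilon D_k) + \kappa(K) = \kappa + 0 = \kappa.$$
By scale invariance of Kodaira dimension, $\kappa(A, K_A + D) \ge \kappa$, hence $\kappa(V) \ge \kappa$. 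The hard part is this final subadditivity step, a deep input from \cite{CP17}; the reduction in the first step also relies on nontrivial abelian variety theory, while the middle fiberwise application is a direct combination of Theorem~\ref{thm:PS3} with Lemmas~\ref{gb} and~\ref{lkd}.
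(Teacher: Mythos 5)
Your argument is correct and follows essentially the same route as the paper's proof: reduce via the structure theorem for (a power of) the nef line bundle $\widehat{\det} f_*\omega_X^{\otimes m}$ and Poincar\'e reducibility to a product situation, apply Theorem \ref{thm:PS3} together with Lemmas \ref{gb} and \ref{lkd} on the general fiber of the complementary projection, and conclude by the subadditivity result of \cite{CP17}. The only (harmless) imprecision is that in general only a power of $\phi^*\mathcal{L}$, rather than $\phi^*\mathcal{L}$ itself, is the pullback of an ample line bundle from $B$; this still suffices for the bigness needed in the fiberwise step.
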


\begin{proof}
If $f_* \omega_{X}^{\otimes m}=0$, then the statement is trivial. Thus we can assume $f_* \omega_{X}^{\otimes m}\neq0$. By Lemma \ref{nz}, we have that
$$\kappa(A, \widehat{\det} f_* \omega_{X}^{\otimes m})\ge0.$$
If $\kappa(A, \widehat{\det} f_* \omega_{X}^{\otimes m})=0$, then the statement is trivial since $\kappa(V)\ge0$. Thus we can assume $\kappa(A, \widehat{\det} f_* \omega_{X}^{\otimes m})>0$. Denote $Z = A \setminus V$ and assume that $Z=W\cup D$ where $\codim_A W\ge 2$ and $D$ is an effective divisor. If $\widehat{\det} f_* \omega_{X}^{\otimes m}$ is big, we deduce that
$$\kappa(V)=\kappa(A, K_A+D)=\dim A=\kappa(A, \widehat{\det} f_* \omega_{X}^{\otimes m})$$
by Theorem \ref{thm:PS3} and Lemma \ref{lkd}. Assume now $\widehat{\det} f_* \omega_{X}^{\otimes m}$ is not big. We can choose a positive integer $N$ which is sufficiently big and divisible such that
$$(\widehat{\det} f_* \omega_{X}^{\otimes m})^{\otimes N}\cong\mathcal{O}_A(E)$$ 
where $E$ is an effective divisor. By a well-known structural theorem, there exist a fibration $p\colon A \to B$ between abelian varieties and an ample effective divisor $H$ on $B$ such that $\dim A > \dim B >0$ and $E=p^*H$. Denote the kernel of $p$ by $K$ which is an abelian subvariety of $A$. By Poincar\'e's complete reducibility theorem, there exists an abelian variety $C\subseteq A$ such that $C+K=A$ and $C\cap K$ is finite, so that the natural morphism $\varphi\colon C\times K\to A$ is an isogeny. We consider the following commutative diagram, $q$ is the projection onto $K$, $k\in K$ is a general point, and $f'$ and $f'_{k}$ are obtained by base change from $f$ via $\varphi$ and the inclusion $i_k$ of the fiber $C_k$ of $q$ over $k$ respectively.
\[
\begin{tikzcd}
X'_k\rar \dar{f'_k} & X' \rar{\varphi'} \dar{f'} \rar & X \dar{f}  \\
C_k \dar \rar{i_k} & C\times K \dar{q} \rar{\varphi} & A \dar{p} \\
\{k\}\rar& K  & B
\end{tikzcd}
\]
By construction, the composition
$$\psi_k = p\circ\varphi\circ i_k \colon C_k \to B$$ 
is an isogeny. Since $\varphi$ is \'etale, $X'$ is smooth. If $W':= \varphi^{-1} (W)$, then $\codim_{C \times K}W'\ge2$, and $f'$ is smooth over $V':=\varphi^{-1} (V)$. We can choose $k$ sufficiently general such that $X'_{k}$ is smooth, $\codim_{C_k}  i_k^{-1}(W')\ge 2$, and
$$i_k^*\widehat{\det} f'_* \omega_{X'}^{\otimes m}\cong\widehat{\det} {f'_k}_*(\omega_{X'}^{\otimes m}|_{X'_k})\cong \widehat{\det} {f'_k}_*\omega_{X'_k}^{\otimes m}$$
by Lemma \ref{gb}. By Lemma \ref{fb}, we deduce that
$$\psi_k^*\mathcal{O}_B(H)\cong i_k^*\varphi^*\mathcal{O}_A(E)\cong i_k^*\varphi^*(\widehat{\det} f_* \omega_{X}^{\otimes m})^{\otimes N}$$
$$\cong i_k^*(\widehat{\det} f'_* \omega_{X'}^{\otimes m})^{\otimes N}\cong(\widehat{\det} {f'_k}_*\omega_{X'_k}^{\otimes m})^{\otimes N}.$$
Since $\psi_k$ is an isogeny, $\psi_k^*\mathcal{O}_B(H)$ is ample and thus $\widehat{\det} {f'_k}_*\omega_{X'_k}^{\otimes m}$ is ample. Since $\codim_{C_k}  i_k^{-1}(W')\ge 2$ and $f'_k$ is smooth over $i_k^{-1}(V')$, we deduce that
$$\kappa(C_k, K_{C_k}+ (i_k^*\varphi^*D)_{\red})=\dim C_k=\dim B=\kappa(A, E)=\kappa(A, \widehat{\det} f_* \omega_{X}^{\otimes m})$$
by Theorem \ref{thm:PS3}. We can choose a rational number $\varepsilon>0$ small enough such that $(C\times K, \varepsilon\varphi^*D)$ is a klt pair. By \cite[Theorem 1.1]{CP17} and choosing $k$ sufficiently general, we deduce that
$$\kappa(V)=\kappa(A, D)=\kappa(A, \varepsilon D)=\kappa(C\times K, \varepsilon\varphi^*D)$$
$$\ge\kappa(C_k, \varepsilon i_k^*\varphi^*D)=\kappa(C_k, (i_k^*\varphi^*D)_{\red})=\kappa(A, \widehat{\det} f_* \omega_{X}^{\otimes m}).$$
\end{proof}

Next, we prove the second part of the inequality in Theorem \ref{hp} in a more general setting which allows klt singularities. We also prove the first inequality in Theorem \ref{sskd} along the way.

\begin{thm}\label{2}
Let $f\colon X \to A$ be a fibration from a klt pair $(X, \Delta)$ to an abelian variety $A$, $F$ the general fiber of $f$, $m\geq1$ a rational number, and $D$ a Cartier divisor on $X$ such that $D\sim_{\Q}m(K_X+\Delta)$. Then
$$\kappa(X, K_X+\Delta)\ge\kappa(F, K_F+\Delta|_F)+\dim V^0(A, f_*\mathcal{O}_X(D)),$$
and
$$\kappa(A, \widehat{\det} f_*\mathcal{O}_X(D))\ge\dim V^0(A, f_*\mathcal{O}_X(D)).$$
\end{thm}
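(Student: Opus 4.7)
The plan is to exploit the Chen--Jiang decomposition of $f_*\mathcal{O}_X(D)$, together with Lemma \ref{po} and the subadditivity of Kodaira dimension of fibrations over abelian varieties from \cite{CP17}. By the Chen--Jiang decomposition available for pushforwards of klt pairs (\cite{Men21}; compare Theorem \ref{IT0} for the $m>1$ case), write
$$f_*\mathcal{O}_X(D)\cong\bigoplus_{i\in I}(\alpha_i\otimes p_i^*\mathcal{F}_i),$$
where each $p_i\colon A\to A_i$ is a fibration of abelian varieties, each $\mathcal{F}_i$ is a nonzero M-regular coherent sheaf on $A_i$, and each $\alpha_i\in\Pic^0(A)$ is torsion. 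As recorded in the introduction,
$$d:=\dim V^0(A, f_*\mathcal{O}_X(D))=\max_{i\in I}\dim A_i,$$
and I fix an index $i_0$ achieving this maximum.

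For the inequality $\kappa(A, \widehat{\det} f_*\mathcal{O}_X(D))\ge d$, taking $\widehat{\det}$ of a direct sum as the tensor product of $\widehat{\det}$'s of summands yields
$$\widehat{\det}f_*\mathcal{O}_X(D)\cong\bigotimes_{i\in I}\bigl(\alpha_i^{\otimes\rank\mathcal{F}_i}\otimes p_i^*\widehat{\det}\mathcal{F}_i\bigr).$$
Since each M-regular $\mathcal{F}_i$ is ample by \cite[Corollary 3.2]{Deb06}, its reflexive determinant $\widehat{\det}\mathcal{F}_i$ is big on $A_i$ by \cite[Lemma 3.2(iii)]{Vie83b}. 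Thus $p_{i_0}^*\widehat{\det}\mathcal{F}_{i_0}$ has Iitaka dimension $\dim A_{i_0}=d$, every other $p_i^*\widehat{\det}\mathcal{F}_i$ is a pullback of an effective line bundle, and the $\alpha_i^{\otimes\rank\mathcal{F}_i}$ are torsion; multiplying sections of a sufficiently divisible power of each factor yields $\kappa(A, \widehat{\det}f_*\mathcal{O}_X(D))\ge\kappa(A, p_{i_0}^*\widehat{\det}\mathcal{F}_{i_0})=d$.

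For the first inequality, the strategy is to reduce to a setting where Lemma \ref{po} applies directly. Choose an isogeny $\varphi\colon A'\to A$ trivializing every $\alpha_i$ and base-change $f$ to $f'\colon X'\to A'$; the induced map $X'\to X$ is \'etale, so $(X',\Delta')$ is klt, $D'\sim_{\Q}m(K_{X'}+\Delta')$, and both $\kappa(X', K_{X'}+\Delta')=\kappa(X, K_X+\Delta)$ and $(F', \Delta'|_{F'})\cong(F, \Delta|_F)$ hold. Using Poincar\'e's complete reducibility exactly as in the proof of Theorem \ref{1}, set $K=(\ker(p_{i_0}\circ\varphi))^0$ and pick a complementary abelian subvariety $C\subseteq A'$ of dimension $d$ so that $\psi\colon C\times K\to A'$ is an isogeny and $p_{i_0}\circ\varphi|_C\colon C\to A_{i_0}$ is itself an isogeny. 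Base-change $f'$ along $\psi$ to obtain $f''\colon X''\to C\times K$. A direct computation using that $p_{i_0}\circ\varphi$ is trivial on $K$ shows that the $i_0$-summand of $f''_*\mathcal{O}_{X''}(D'')$ equals $\pi_C^*\mathcal{G}$, where $\pi_C\colon C\times K\to C$ is the projection and $\mathcal{G}:=(p_{i_0}\circ\varphi|_C)^*\mathcal{F}_{i_0}$ is M-regular on $C$, hence ample. Pushing forward along $\pi_C$ and invoking the projection formula, the composition $g'':=\pi_C\circ f''\colon X''\to C$ is a fibration with
$$g''_*\mathcal{O}_{X''}(D'')\cong\mathcal{G}\oplus(\text{other summands}).$$
Lemma \ref{po} applied to $g''$ then yields $\kappa(X'', K_{X''}+\Delta'')=\kappa(F'', K_{F''}+\Delta''|_{F''})+d$ for the general fiber $F''$ of $g''$. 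Since $(F'', \Delta''|_{F''})$ is a klt pair fibered over $K$, an abelian variety, with general fiber $(F, \Delta|_F)$, subadditivity \cite[Theorem 1.1]{CP17} gives $\kappa(F'', K_{F''}+\Delta''|_{F''})\ge\kappa(F, K_F+\Delta|_F)$, and the \'etale identification $\kappa(X'', K_{X''}+\Delta'')=\kappa(X, K_X+\Delta)$ completes the argument.

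The main obstacle is converting the abstract summand $\alpha_{i_0}\otimes p_{i_0}^*\mathcal{F}_{i_0}$ into a genuine M-regular (hence ample) coherent sheaf $\mathcal{G}$ on a $d$-dimensional abelian variety suitable for Lemma \ref{po}: this requires simultaneously killing the torsion line bundle $\alpha_{i_0}$ via an isogeny and flattening the fibration $p_{i_0}$ to a direct-product projection via Poincar\'e's complete reducibility, precisely the structural tools used in the proof of Theorem \ref{1}.
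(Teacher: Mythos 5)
Your proposal is correct, and it relies on the same toolkit as the paper (Chen--Jiang decomposition, Poincar\'e reducibility, Lemma \ref{po}, and \cite[Theorem 1.1]{CP17}), but the execution differs in two genuine ways. For the inequality $\kappa(A, \widehat{\det} f_*\mathcal{O}_X(D))\ge\dim V^0(A, f_*\mathcal{O}_X(D))$, you take $\widehat{\det}$ of the Chen--Jiang decomposition directly on $A$ and use that each $\widehat{\det}\mathcal{F}_i$ is big on $A_i$, hence $\Q$-effective, so that twisting by the remaining factors cannot decrease the Iitaka dimension below $\dim A_{i_0}$; this is essentially the computation the paper reserves for Lemma \ref{equality} (where all the $p_i$ coincide), and it bypasses the paper's route for Theorem \ref{2}, which restricts to $C_k$ and needs Lemmas \ref{gb}, \ref{fb}, \ref{nz} and, crucially, the nefness of $\widehat{\det}$ of a GV-sheaf (Lemma \ref{detn}) to conclude that big tensor nef is big. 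Your version is more economical here. For the first inequality, you swap the order of the two main steps: you apply Lemma \ref{po} to the composite fibration $g''\colon X''\to C$, after arranging via an isogeny killing the $\alpha_i$ and Poincar\'e reducibility that the distinguished summand becomes $\pi_C^*\mathcal{G}$ with $\mathcal{G}$ ample, so that $g''_*\mathcal{O}_{X''}(D'')$ acquires $\mathcal{G}$ as an ample direct summand by the projection formula; you then apply \cite{CP17} to the fibration $F''\to K$. The paper instead applies \cite{CP17} to $X'\to K$ first and Lemma \ref{po} to the restricted fibration $X'_k\to C_k$ afterwards, which is why it needs the restriction statement Lemma \ref{gb} and \cite[Proposition 4.1]{LPS20}. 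Both orders work. Two minor remarks, neither a gap: you only need $\mathcal{G}$ to be ample, not M-regular, and ampleness of a coherent sheaf under the finite pullback $(p_{i_0}\circ\varphi|_C)^*$ is exactly what the paper invokes; and since Lemma \ref{po} speaks of the very general fiber, the comparison with the general fiber $F$ of $f$ uses, as the paper notes via \cite[Theorem 4.2]{HMX18}, that $\kappa(F, K_F+\Delta|_F)$ is constant on general fibers.
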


\begin{proof}
If $f_*\mathcal{O}_X(D)=0$, then the statement is trivial. Thus we can assume $f_*\mathcal{O}_X(D)\neq0$. By \cite[Theorem 1.3]{Men21}, $f_*\mathcal{O}_X(D)$ has the Chen--Jiang decomposition
$$f_*\mathcal{O}_X(D)\cong \bigoplus_{i\in I}(\alpha_i\otimes p_i^*\mathcal{F}_i),$$
where each $A_i$ is an abelian variety, each $p_i\colon A\to A_i$ is a fibration, each $\mathcal{F}_i$ is a nonzero M-regular coherent sheaf on $A_i$, and each $\alpha_i\in\Pic^0(A)$ is a torsion line bundle. By \cite[Lemma 3.3]{LPS20}, we deduce that
$$\dim V^0(A, f_*\mathcal{O}_X(D))=\max_{i\in I} \dim A_i.$$
We consider the fibration $p_j\colon A\to A_j$ for a fixed $j\in I$. Denote the kernel of $p_j$ by $K$ which is an abelian subvariety of $A$. By Poincar\'e's complete reducibility theorem, there exists an abelian variety $C\subseteq A$ such that $C+K=A$ and $C\cap K$ is finite, so that the natural morphism $\varphi\colon C\times K\to A$ is an isogeny. We consider the following commutative diagram, $q$ is the projection onto $K$, $k\in K$ is a general point, and $f'$ and $f'_{k}$ are obtained by base change from $f$ via $\varphi$ and the inclusion $i_k$ of the fiber $C_k$ of $q$ over $k$ respectively. We define a $\Q$-divisor $\Delta'$ by $K_{X'}+\Delta'=\varphi'^*(K_X+\Delta)$. Since $\varphi'$ is an \'etale morphism, the new pair $(X', \Delta')$ is klt and $\Delta'$ is effective. Define $D'$ by $\varphi'^*D$ then we have $D'\sim_{\Q}m(K_{X'}+\Delta')$. By the flat base change theorem, we have that $f'_*\mathcal{O}_{X'}(D')\cong\varphi^*f_*\mathcal{O}_X(D)$.  
\[
\begin{tikzcd}
(X'_k, \Delta'|_{X'_k})\rar \dar{f'_k} & (X', \Delta') \rar{\varphi'} \dar{f'} \rar & (X, \Delta) \dar{f}  \\
C_k \dar \rar{i_k} & C\times K \dar{q} \rar{\varphi} & A \dar{p_j} \\
\{k\}\rar& K  & A_j
\end{tikzcd}
\]
By construction, the composition
$$\psi_k = p_j\circ\varphi\circ i_k \colon C_k \to A_j$$ 
is an isogeny. If we denote $\mathcal{T} : = \bigoplus_{i\neq j}(\alpha_i\otimes p_i^*\mathcal{F}_i)$, then we have 
$$f_*\mathcal{O}_X(D)\cong (\alpha_j\otimes p_j^*\mathcal{F}_j) \oplus \mathcal{T}.$$
We can choose $k$ sufficiently general such that $(X'_k, \Delta'|_{X'_k})$ is klt and
$${f'_k}_*\mathcal{O}_{X'_k}(D'|_{X'_k})\cong i_k^*f'_*\mathcal{O}_{X'}(D')\cong i_k^*\varphi^*f_*\mathcal{O}_X(D)$$
by \cite[Proposition 4.1]{LPS20}. We deduce that
$${f'_k}_*\mathcal{O}_{X'_k}(D'|_{X'_k})\cong i_k^*\varphi^*f_*\mathcal{O}_X(D)\cong(i_k^*\varphi^*\alpha_j\otimes \psi_k^*\mathcal{F}_j) \oplus i_k^*\varphi^*\mathcal{T}.$$
Since $\mathcal{F}_j$ is an M-regular sheaf on $A_j$, it is ample by \cite[Proposition 2.13]{PP03} and \cite[Corollary 3.2]{Deb06}. Since $\psi_k$ is an isogeny and $i_k^*\varphi^*\alpha_j$ is a torsion line bundle, we deduce that $i_k^*\varphi^*\alpha_j\otimes \psi_k^*\mathcal{F}_j$ is also ample. We deduce that $i_k^*\varphi^*\mathcal{T}$ is a GV-sheaf since it is a direct summand of ${f'_k}_*\mathcal{O}_{X'_k}(D'|_{X'_k})$ which is a GV-sheaf by \cite[Corollary 4.1]{Men21}. Thus we have that
$${f'_k}_*\mathcal{O}_{X'_k}(D'|_{X'_k})\cong\mathcal{H}_1 \oplus \mathcal{H}_2,$$
with $\mathcal{H}_1$ ample and $\mathcal{H}_2$ a GV-sheaf. The very general fiber of $f'_k$ is $F$. We deduce that
$$\kappa(X'_k, D'|_{X'_k})=\kappa(F, D|_{F})+\dim C_k=\kappa(F, D|_{F})+\dim A_j$$
by Lemma \ref{po}. By \cite[Theorem 4.2]{HMX18}, $\kappa(F, K_F+\Delta|_F)$ is constant for general fiber $F$ of $f$. By \cite[Theorem 1.1]{CP17} and choosing $k$ sufficiently general, we deduce that
$$\kappa(X, K_X+\Delta)=\kappa(X', K_{X'}+\Delta')\ge\kappa(X'_k, K_{X'_k}+\Delta'|_{X'_k})$$
$$=\kappa(X'_k, D'|_{X'_k})=\kappa(F, D|_{F})+\dim A_j=\kappa(F, K_F+\Delta|_F)+\dim A_j.$$
Thus we deduce that
\begin{align*}
\kappa(X, K_X+\Delta)&\ge\kappa(F, K_F+\Delta|_F)+\max_{i\in I} \dim A_i\\
&=\kappa(F, K_F+\Delta|_F)+\dim V^0(A, f_*\mathcal{O}_X(D)).
\end{align*}

We next prove the second inequality in the theorem. Since $\mathcal{H}_1$ is big, $\widehat{\det}\mathcal{H}_1$ is a big line bundle by \cite[Lemma 3.2(iii)]{Vie83b} (see also \cite[Properties 5.1.1]{Mor87}). We deduce that $\widehat{\det}\mathcal{H}_2$ is a nef line bundle by Lemma \ref{detn}. Thus their tensor product $\widehat{\det}{f'_k}_*\mathcal{O}_{X'_k}(D'|_{X'_k})$ is big. We can choose $k$ sufficiently general such that
$$i_k^*\widehat{\det}f'_*\mathcal{O}_{X'}(D')\cong \widehat{\det} {f'_k}_*\mathcal{O}_{X'_k}(D'|_{X'_k})$$
by Lemma \ref{gb}. By Lemma \ref{nz}, we can choose a positive integer $N$ which is sufficiently big and divisible such that
$$(\widehat{\det} f_*\mathcal{O}_X(D))^{\otimes N}\cong\mathcal{O}_A(E)$$ 
where $E$ is an effective divisor. By Lemma \ref{fb}, we deduce that
$$i_k^*\varphi^*\mathcal{O}_A(E)\cong i_k^*\varphi^*(\widehat{\det}f_*\mathcal{O}_X(D))^{\otimes N}$$
$$\cong i_k^*(\widehat{\det} f'_*\mathcal{O}_{X'}(D'))^{\otimes N}\cong(\widehat{\det} {f'_k}_*\mathcal{O}_{X'_k}(D'|_{X'_k}))^{\otimes N}.$$
By the same argument used at the end of the proof of Theorem \ref{1} and choosing $k$ sufficiently general, we deduce that
$$\kappa(A, \widehat{\det} f_*\mathcal{O}_X(D))=\kappa(A, \mathcal{O}_A(E))\ge\kappa(C_k, i_k^*\varphi^*\mathcal{O}_A(E))$$
$$=\kappa(C_k, \widehat{\det} {f'_k}_*\mathcal{O}_{X'_k}(D'|_{X'_k}))=\dim C_k=\dim A_j.$$
Thus we deduce that
$$\kappa(A, \widehat{\det} f_*\mathcal{O}_X(D))\ge\max_{i\in I} \dim A_i=\dim V^0(A, f_*\mathcal{O}_X(D)).$$
\end{proof}

\begin{rem}
We still have $\kappa(A, \widehat{\det} f_*\mathcal{O}_X(D))\ge\dim V^0(A, f_*\mathcal{O}_X(D))$ if $f$ is only assumed to be surjective by the same proof as above.
\end{rem}

\begin{lemma}\label{equality}
Let $f\colon X \to A$ be a surjective morphism from a klt pair $(X, \Delta)$ to an abelian variety $A$, $m>1$ a rational number, and $D$ a Cartier divisor on $X$ such that $D\sim_{\Q}m(K_X+\Delta)$. Then
$$\kappa(A, \widehat{\det} f_*\mathcal{O}_X(D))=\dim V^0(A, f_*\mathcal{O}_X(D)).$$
\end{lemma}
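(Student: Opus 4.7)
The plan is to exploit the refined structural result Theorem \ref{IT0}, which is where the hypothesis $m>1$ enters: it produces a \emph{single} fibration $p\colon A \to B$ between abelian varieties such that every summand of a Chen--Jiang decomposition of $f_*\mathcal{O}_X(D)$ is pulled back from $B$. Once this is in hand, both sides of the claimed equality can be identified with $\dim B$.

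Concretely, I would first assume $f_*\mathcal{O}_X(D) \neq 0$ and apply Theorem \ref{IT0} with $l = 1$ to obtain a fibration $p\colon A \to B$ together with a decomposition
$$f_*\mathcal{O}_X(D) \cong \bigoplus_{i \in I}\bigl(\alpha_i \otimes p^*\mathcal{F}_i\bigr),$$
where each $\mathcal{F}_i$ is a nonzero $\IT$-sheaf on $B$ (hence M-regular) and each $\alpha_i \in \Pic^0(A)$ is torsion. Since all abelian-variety factors coincide with $B$, this is a Chen--Jiang decomposition in the sense of Definition \ref{CJD}, and \cite[Lemma 3.3]{LPS20} (applied exactly as in the proof of Theorem \ref{2}) yields
$$\dim V^0(A, f_*\mathcal{O}_X(D)) = \dim B.$$

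Next I would compute the reflexive determinant from this decomposition. Since $p\colon A \to B$ is a surjective morphism of abelian varieties, it is flat, so pullback by $p$ commutes with the reflexive-determinant construction; working on the locus where $f_*\mathcal{O}_X(D)$ and the $\mathcal{F}_i$ are locally free and then taking reflexive hulls gives
$$\widehat{\det} f_*\mathcal{O}_X(D) \cong \alpha \otimes p^*\mathcal{L},$$
with $\alpha = \bigotimes_i \alpha_i^{\otimes \rank \mathcal{F}_i} \in \Pic^0(A)$ torsion and $\mathcal{L} = \bigotimes_i \widehat{\det}\mathcal{F}_i$ a line bundle on $B$. Each $\mathcal{F}_i$ is ample by \cite[Corollary 3.2]{Deb06}, hence each $\widehat{\det}\mathcal{F}_i$ is big by \cite[Lemma 3.2(iii)]{Vie83b}, and therefore $\mathcal{L}$ is big on $B$.

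To conclude, since $\alpha$ is torsion and $p$ is a fibration with $p_*\mathcal{O}_A = \mathcal{O}_B$,
$$\kappa\bigl(A, \widehat{\det} f_*\mathcal{O}_X(D)\bigr) = \kappa(A, p^*\mathcal{L}) = \kappa(B, \mathcal{L}) = \dim B = \dim V^0(A, f_*\mathcal{O}_X(D)),$$
which is the desired equality. The technical point I expect to require the most care is the identification $\widehat{\det} f_*\mathcal{O}_X(D) \cong \alpha \otimes p^*\mathcal{L}$: the formula is transparent on the locally-free locus, but extending it as an isomorphism of line bundles on all of $A$ requires checking compatibility of reflexive hulls with the flat pullback $p^*$, in the same spirit as Lemmas \ref{fb} and \ref{gb}.
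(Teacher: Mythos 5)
Your proposal is correct and follows essentially the same route as the paper's own proof: apply Theorem \ref{IT0} to get the decomposition over a single base $B$, identify $\dim V^0$ with $\dim B$ via \cite[Lemma 3.3]{LPS20}, compute $\widehat{\det} f_*\mathcal{O}_X(D)$ as a torsion twist of the pullback of the big line bundle $\bigotimes_i \widehat{\det}\mathcal{F}_i$ using flatness of $p$, and conclude. The technical point you flag about compatibility of reflexive hulls with flat pullback is handled in the paper exactly as you anticipate, using that $\widehat{\det}(\alpha_i\otimes p^*\mathcal{F}_i)$ is already a line bundle and that $p$ is flat.
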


\begin{proof}
If $f_*\mathcal{O}_X(D)=0$, then the statement is trivial. Thus we can assume $f_*\mathcal{O}_X(D)\neq0$. By Theorem \ref{IT0}, there exists a fibration $p\colon A\to B$ to an abelian variety $B$ such that $f_*\mathcal{O}_X(D)$ admits a finite direct sum decomposition
$$f_*\mathcal{O}_X(D)\cong\bigoplus_{i\in I}(\alpha_i\otimes p^*\mathcal{F}_i),$$
where each $\mathcal{F}_i$ is a nonzero coherent sheaf on $B$ satisfying $\IT$, and each $\alpha_i\in\Pic^0(A)$ is a torsion line bundle. Each $\mathcal{F}_i$ is torsion-free. By \cite[Lemma 3.3]{LPS20}, we deduce that
$$\dim V^0(A, f_*\mathcal{O}_X(D))=\dim B.$$
Since $p$ is flat and $\widehat{\det}(\alpha_i\otimes p^*\mathcal{F}_i)$ is a line bundle, we deduce that
$$\widehat{\det} f_*\mathcal{O}_X(D)\cong\bigotimes_{i\in I}\widehat{\det}(\alpha_i\otimes p^*\mathcal{F}_i)\cong\bigotimes_{i\in I}(\widehat{\det}p^*\mathcal{F}_i\otimes\alpha_i^{\otimes\rank\mathcal{F}_i})$$
$$\cong\bigotimes_{i\in I}(p^*\widehat{\det}\mathcal{F}_i\otimes\alpha_i^{\otimes\rank\mathcal{F}_i})\cong p^*(\bigotimes_{i\in I}\widehat{\det}\mathcal{F}_i)\otimes\bigotimes_{i\in I}\alpha_i^{\otimes\rank\mathcal{F}_i}.$$
Since $\mathcal{F}_i$ satisfies $\IT$, it is ample by \cite[Proposition 2.13]{PP03} and \cite[Corollary 3.2]{Deb06}. The sheaf $\mathcal{F}_i$ is big since an ample sheaf is big (see e.g. \cite[Section 2]{Deb06} and \cite[Section 5]{Mor87}). Thus $\widehat{\det}\mathcal{F}_i$ is a big line bundle by \cite[Lemma 3.2(iii)]{Vie83b} (see also \cite[Properties 5.1.1]{Mor87}). We deduce that
$$\kappa(A, \widehat{\det} f_*\mathcal{O}_X(D))=\kappa(A, p^*(\bigotimes_{i\in I}\widehat{\det}\mathcal{F}_i)\otimes\bigotimes_{i\in I}\alpha_i^{\otimes\rank\mathcal{F}_i})$$
$$=\kappa(A, p^*(\bigotimes_{i\in I}\widehat{\det}\mathcal{F}_i))=\kappa(B, \bigotimes_{i\in I}\widehat{\det}\mathcal{F}_i)=\dim B=\dim V^0(A, f_*\mathcal{O}_X(D)).$$
\end{proof}

\begin{proof}[Proof of Theorem \ref{hp}]
It is a direct corollary of Theorem \ref{1}, Theorem \ref{2} and Lemma \ref{equality}.
\end{proof}

\begin{proof}[Proof of Corollary \ref{hpc}]
It is a direct corollary of Theorems \ref{hp} and \ref{IT0}.
\end{proof}

\begin{proof}[Proof of Corollary \ref{nns}]
Let $f\colon X \to A$ be a smooth morphism to an abelian variety $A$. Then $f$ is surjective. We consider its Stein factorization $f=\varphi\circ h$ where $B$ is a normal projective variety, $h\colon X\to B$ is a fibration, and $\varphi\colon B\to A$ is a finite surjective morphism. Since $f$ is smooth, we deduce that $h$ is smooth and $\varphi$ is \'etale. Thus $B$ is also an abelian variety. Since $q(G)=0$ and $h$ is a smooth fibration, we deduce that $\dim A=\dim B=0$ by Corollary \ref{hpc}.
\end{proof}

\begin{proof}[Proof of Corollary \ref{var}]
By Theorem \ref{IT0} and \cite[Lemma 3.3]{LPS20}, there exists an abelian variety $B$ such that
$$\dim V^0(A, f_*\omega_X^{\otimes m})=\dim B$$
for every integer $m>1$ such that $f_*\omega_X^{\otimes m}\neq0$. By Theorem \ref{hp}, we have
$$\kappa(A, \widehat{\det} f_* \omega_{X}^{\otimes m})=\dim V^0(A, f_*\omega_X^{\otimes m})$$
for every integer $m>1$. By \cite[Theorem 1.1]{Kaw85a}, there exists an integer $k>1$ such that
$$\kappa(A, \widehat{\det} f_* \omega_{X}^{\otimes k})\ge\Var(f)$$
since the general fiber of $f$ has a good minimal model. In particular, $f_*\omega_X^{\otimes k}\neq0$. Thus we deduce that
$$\dim V^0(A, f_*\omega_X^{\otimes m})=\dim V^0(A, f_*\omega_X^{\otimes k})=\kappa(A, \widehat{\det} f_* \omega_{X}^{\otimes k})\ge\Var(f)$$
for every integer $m>1$ such that $f_*\omega_X^{\otimes m}\neq0$. If $g\colon X\to Y$ is a smooth model of the Iitaka fibration of $X$ where $Y$ is a smooth projective variety, then
$$q(Y)\ge\dim B\ge\Var(f)$$
by Theorem \ref{IT0}.
\end{proof}

\begin{proof}[Proof of Theorem \ref{sskd}]
It is a direct corollary of Theorem \ref{2} and Lemma \ref{equality}.
\end{proof}

\begin{proof}[Proof of Corollary \ref{sskdc}]
It is a direct corollary of Theorems \ref{sskd} and \ref{IT0}.
\end{proof}

\begin{proof}[Proof of Corollary \ref{bta}]
It is a direct corollary of Corollary \ref{sskdc} and \cite[Theorem B]{Wan16}.
\end{proof}

	\bibliographystyle{amsalpha}
	\bibliography{biblio}	

\end{document}